\algnewcommand{\IIf}[1]{\State\algorithmicif\ #1\ \algorithmicthen}
\def\blx@maxline{77}
\title{On perfectness in Gaussian graphical models}
\author{Arash A. Amini, Bryon Aragam, Qing Zhou}
\providecommand\normprc{$\mathsf{normPrc}$\xspace}
\providecommand\normcov{$\mathsf{normCov}$\xspace}
\providecommand{\Lc}{\mathcal{L}}
\def\sep(#1,#2,#3){#1-#2-#3}
\providecommand\independent{\protect\mathpalette{\protect\independenT}{\perp}}
\def\independenT#1#2{\mathrel{\rlap{$#1#2$}\mkern4mu{#1#2}}}
\def\given{\,\mid\,}
\definecolor{maroon}{rgb}{0.5, 0.0, 0.0}
\providecommand{\complex}{\mathbb{C}}
\providecommand{\Cyc}{\mathcal{S}}
\providecommand{\Cc}{\mathcal{C}}
\newcommand{\Pc}{\mathcal{P}}
\newcommand{\Dc}{\mathcal D}
\providecommand{\rv}{X}
\newcommand{\sphere}{\mathbb S}
\newcommand{\Rc}{\mathcal{R}}
\newcommand{\dual}{\rceil}
\newcommand{\ipp}[1]{\langle \!\langle #1 \rangle\! \rangle}
\newcommand{\Mc}{\mathcal{M}}
\newcommand\Hc{\mathcal{H}}
\newcommand{\Nc}{\mathcal N}
\newcommand{\Np}{\mathcal N}
\begin{document}
\maketitle

\begin{abstract}
Knowing when a graphical model is perfect to a distribution is essential  in order to relate separation in the graph to conditional independence in the distribution, and this is particularly important when performing inference from data. When the model is perfect, there is a one-to-one correspondence between conditional independence statements in the distribution and separation statements in the graph. Previous work has shown that  almost all models based on linear directed acyclic graphs as well as Gaussian chain graphs are perfect, the latter of which subsumes Gaussian graphical models (i.e., the undirected Gaussian models) as a special case. However, the complexity of chain graph models leads to a proof of this result which is indirect and mired by the complications of parameterizing this general class. 
In this paper, we directly approach the problem of perfectness for the Gaussian graphical models, and provide a new proof, via a more transparent parametrization, that almost all such models are perfect. Our approach is based on, and substantially extends, a construction of Ln{\v{e}}ni{\v{c}}ka and Mat{\'{u}}{\v{s}} showing the existence of a perfect Gaussian distribution for any graph. 
\end{abstract}

\section{Introduction}

Graphical models are among the most common approaches to modeling dependencies in multivariate data~\cite{Lauritzen1996,koller2009}. To be concrete, consider a random vector $X = (X_1,\dots,X_d) \in \reals^d$. The general idea behind graphical modeling is to represent the conditional independence (CI) statements satisfied by the multivariate distribution $\pr$ of $X$ by the \emph{separating sets} in a graph $G=(V,E)$ with nodes $V=\{X_1,\dots,X_d\}$. Whenever graph separation in $G$ implies conditional independence in $\pr$, the distribution is said to be \emph{Markovian} 
with respect to (w.r.t.) $G$ and we have a graphical model for $\pr$ (see Section~\ref{subsec:gm} for details). In this paper, we focus on undirected graphs (UGs), in which case, $G$ is called a conditional independence graph (CIG) for $\pr$.

A question that arises is to what extent such correspondence is possible for a given distribution. A particular case of interest is when the correspondence is exact, that is, the set of CI statements entailed by the distribution is the same as the set of separation statements in the graph. If this desirable property holds, the distribution $\pr$  is said to be \emph{perfect} with respect to the graph $G$.  
In other words, in the perfect case, both the Markov property above and its reverse implication hold (i.e., CI in $\pr$ implies graph separation in $G$ as well). Thus, we can ``read off'' the CI relations in $\pr$ by inspecting  the graph $G$. Moreover, the graph $G$ provides an economical representation of these relations that can be learned from data \cite{koller2009,spirtes2000}.

In previous work \cite{spirtes1993causation,meek1995strong,levitz2001separation,pena2011}, it has been shown that almost all linear directed acyclic graph (DAG) and Gaussian chain graph (CG) models are perfect.
In this work, we consider the case of undirected Gaussian graphical models (GGMs),
i.e. $X \sim N(0,\Sigma)$, and show that almost all of them are perfect. In other words, almost all Gaussian distributions are capable of being perfectly represented by an undirected graph $G$. 
Technically speaking, the results of~\cite{levitz2001separation,pena2011}  already show perfectness for almost all Gaussian distributions that factor according to a UG (i.e. as a special case of a CG), however, the constructions and proofs are obscured by the complexity of the CG case. In particular, although showing essentially the same result, \cite{pena2011} and~\cite{levitz2001separation} use two different indirect parametrizations of the CG-Markovian Gaussian distributions. 
In this paper, we provide a much simpler and more direct parameterization for the undirected case, which should be of independent interest. Our technique is based on an elegant construction of~\cite{Lnenicka2007} which was used to prove the existence of a perfect Gaussian distribution for any given UG. We extend this construction to a full parametrization of the UG-Markovian Gaussian distributions and prove the so-called \emph{strong completeness} of this class (i.e. that almost all are perfect). 

Our proof contains a simpler constructive description of the set of imperfect covariance matrices, which provides useful intuition for understanding perfectness assumption in modeling and estimation with UGs. 
As a byproduct of our proof, we construct a probability measure over inverse covariance matrices supported on the edge set of a graph $G$. This measure may be used as a trial or proposal distribution in Monte Carlo algorithms to simulate from many distributions over positive definite matrices with support restriction.

  The paper is organized as follows: Section~\ref{subsec:related} reviews related work and Section~\ref{sec:Gauss:graph:mod} provides some background on graphical modeling and sets up the notation. Section~\ref{sec:main:res} contains the statement of the main result and some discussion. Section~\ref{sec:construction} provides the details of our parameterization of Markovian distributions, the construction of the null set of perfect distributions and a more technical version of our main result. The proof of the main result appears in Section~\ref{sec:proofs} with the proof of some of the technical lemmas deferred to Section~\ref{sec:proof:aux}. 

\subsection{Related work}
\label{subsec:related}

The notion of perfect graphical models has a long history, and we refer the reader to textbooks such as \cite{pearl1988,koller2009} for details. For example, the problem of testing whether or not a given graph is perfect for a distribution has been studied in recent works \cite{tatikonda2014testing,sadeghi2017faithfulness,soh2018identifiability}.
In this paper, we focus on a related but distinct question: \emph{Given a graph $G$, how likely is it that a random Gaussian distribution is perfect with respect to $G$?} Making this statement precise requires a bit of care; see Section~\ref{sec:main:res}. Similar results are already known for other classes of graphical models. For DAGs, Markov perfectness, also known as \emph{faithfulness}, was shown in~\cite{spirtes1993causation,meek1995strong}. Using the same techniques, the result was extended to Gaussian distributions that factor according to chain graphs in~\cite{levitz2001separation,pena2011}. Chain graphs allow for both directed and undirected edges and the corresponding graphical models extend both the UG and DAG models. There are two equivalent formulations of the Markov property for chain graphs referred to as the Andersson--Madigan--Perlman (AMP) versus the Lauritzen--Wermuth--Frydenberg (LWF) interpretation~\cite{Lauritzen1996,andersson2001alternative,studeny2006probabilistic}. 
In~\cite[Section~6]{levitz2001separation}, perfectness of almost all Gaussian distributions that are Markovian w.r.t. to a CG was shown using the AMP interpretation. A similar result was obtained in~\cite{pena2011} using the LWF interpretation.

\subsection{Gaussian graphical models}\label{sec:Gauss:graph:mod}
\label{subsec:gm}

Consider an undirected graph $G=([d],E)$, where $[d] = \{1,\dots,d\}$. 
Two nodes $i$ and $j$ are \emph{adjacent}, or \emph{neighbors}, if $(i,j)\in E$, in which case we write $i \sim j$, otherwise $i \nsim j$. A \emph{path} from $i$ to $j$ is a sequence $i = k_1,k_2,\ldots,k_{n-1},k_{n}=j \in[d]$ of distinct elements with $(k_{\ell},k_{\ell+1})\in E$ for each $\ell=1,\ldots,n-1$. 
Given two subsets $A,B\subset [d]$, a path connecting $A$ to $B$ is any path with $k_{1} \in A$ and $k_{n}\in B$.
A subset $C\subset [d]$ \emph{separates} $A$ from $B$, denoted by  $A-C-B$, if all paths connecting $A$ to $B$ intersect $C$ (i.e. $k_{\ell}\in C$ for some $1<\ell<n$), otherwise we write $ \neg (\sep(A,C,B))$. Implicit in this definition is that  $A,B$ and $C$ are disjoint.

To simplify the notation, we often identify $G$ with its edge set $E$, i.e., $G \simeq E$. For example, we also write $|G| := |E|$ to denote the number of edges. We also adopt the following shorthands: $\{i\} = i$ and $\{i,j\} = ij$, $A \cup \{i\} = Ai$, $A \cup B = AB$ and so on, that is, the union of sets is denoted by juxtaposition.
In addition, we let $[d]_S = [d] \setminus S = \{1,\dots,d\} \setminus S$. Common uses of these notational conventions are: $[d]_j = [d] \setminus \{j\}$ and $[d]_{ij} = [d] \setminus ij = [d] \setminus \{i,j\}$.
For a matrix $\Sigma \in \reals^{d \times d}$, and subsets $A,B \subset [d]$, we use $\Sigma_{A,B}$ for the submatrix on rows and columns indexed by $A$ and $B$, respectively. Single index notation is used for principal submatrices, so that $\Sigma_{A} = \Sigma_{A,\,A}$. For example, $\Sigma_{i,j}$ is the $(i,j)$th element of $\Sigma$ (using the singleton notation), whereas $\Sigma_{ij} = \Sigma_{ij,\,ij}$ is the $2\times 2$ submatrix on $\{i,j\}$ and $\{i,j\}$. 
Similarly, $\Sigma_{Ai,Bj}$ is the submatrix indexed by rows $A \cup \{i\}$ and columns $B \cup \{j\}$.

Now, consider a random vector $X = (X_1,\dots,X_d) \in \reals^d$ and a graph $G$ on nodes $[d]$ where node $i$ represents random variable $X_i$.  A random vector $X$ (or its distribution $\pr$) is called \emph{Markovian} w.r.t. $G$ (and $G$ a CIG for $X$) if  
\begin{align}\label{eq:markovian:def}
	\text{$\sep(A,C,B)$ \;in $G \implies$ $\rv_{A}\independent \rv_{B}\given \rv_{C}$} \;\text{in $\pr$}.
\end{align}
Here, $\rv_{S} = \{\rv_i:\; i \in S\}$ for any $S\subset[d]$.
That is, the separation of the nodes in $A$ and $B$ by the nodes in $C$ implies that $\rv_A$ is independent of $\rv_B$ given $\rv_C$. The special case where~\eqref{eq:markovian:def} is assumed to hold only for sets of the form $A = \{i\}$, $B=\{j\}$ and $C = [d] \setminus \{i,j\}$ is called the \emph{pairwise Markov property}. This special case implies the full condition~\eqref{eq:markovian:def} if the distribution has a positive and continuous density w.r.t. a product measure on $\reals^d$~\cite[p.~34]{Lauritzen1996}.

Even if~\eqref{eq:markovian:def} holds, the converse need not necessarily hold. When the reverse implication of \eqref{eq:markovian:def} is true, we say the distribution of $\rv$ is perfect with respect to  graph $G$, or simply $G$ is perfect for $X$:

\begin{defn}
	A graph $G$ is \emph{perfect} for $X$ if $\sep(A,C,B)$ in $G \iff \rv_{A}\independent \rv_{B}\given  \rv_{C}$ in~$\pr$.
\end{defn}

In the Gaussian case, we have $X \sim N(0,\Sigma)$ where $\Sigma = (\Sigma_{i,j}) \in \reals^{d\times d}$ is the covariance matrix of $X$, that is, $\Sigma_{i,j} = \ex [X_i X_j]$. Using known results on Gaussian pairwise conditional independence \cite[Prop.~5.2]{Lauritzen1996}, $X_i \independent X_j \mid X_{[d]_{ij}}$ if and only if $[\Sigma^{-1}]_{i,j} = 0$. Thus, letting $G$ be defined by
\begin{align}\label{eq:inv:covariance}
\text{$i \nsim j$ in $G$} \iff [\Sigma^{-1}]_{i,j} = 0,
\end{align}
for $i \neq j$,
we have that $X$ (or $N(0,\Sigma)$ or $\Sigma$) satisfies the pairwise Markov property w.r.t.~$G$. Assuming that $\Sigma \succ 0$, it follows that $X$ w.r.t.~$G$ satisfies the (global) Markov property, hence $G$ is a CIG for $X$. Throughout, we will make the assumption $\Sigma \succ 0$, or equivalently that the Gaussian distribution is \emph{regular}.

From the above discussion, in the Gaussian case, Markov properties and CIGs can be equivalently characterized by the covariance matrix $\Sigma$.
 Thus, we can equivalently talk about \emph{perfectness of a covariance matrix}. The corresponding graph is uniquely implied in this case, given by the support of $\Sigma^{-1}$, i.e., $\supp(\Sigma^{-1}) := \{ (i,j) : (\Sigma^{-1})_{i,j} \neq 0, \; i < j\}$.
 We caution the reader that while the graph $G$ has $|G|$ edges by definition, the support of $\Sigma^{-1}$ has $|G|+d$ elements.  We will write $G^\circ$ for the graph $G$ with self-loops added, i.e., edges of the form $(i,i)$ for all $i \in [d]$. Then we have $|G^{\circ}|=|\supp(\Sigma^{-1})|=|G|+d$. The above discussion is summarized in the following definition:
 
 \begin{defn}
 	A positive definite matrix $\Sigma$ is  Markovian w.r.t. graph $G$ if $\supp(\Sigma^{-1}) = G^\circ$. It is perfect w.r.t. $G$ if $N(0,\Sigma)$  is so.
 \end{defn}

\section{Main result}\label{sec:main:res}

 In~\cite{Lnenicka2007}, it was~shown that for any graph $G$, there exists a regular Gaussian distribution which is  perfect w.r.t.~$G$.  As discussed in Section~\ref{sec:Gauss:graph:mod}, given any positive definite matrix $\Sigma$, we can ask whether it is perfect or not, with the graph of $G$ being implicit from the support of $\Sigma^{-1}$. This is the language that we will use throughout.
 The result of~\cite{Lnenicka2007} can be restated as follows: for any potential CIG, there is at least one covariance matrix $\Sigma$ which is perfect w.r.t. it.
Here, we extend the argument in~\cite{Lnenicka2007} to show that \emph{almost all} covariance matrices are perfect.

 \begin{thm}\label{thm:perfect:perval}
 	For any undirected graph $G$ on $[d]$, the set of positive definite matrices $A \in \reals^{d \times d}$ for which  $\Sigma = A^{-1}$ is Markovian but not perfect w.r.t. $G$ has Lebesgue measure zero. 
 \end{thm}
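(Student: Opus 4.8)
The plan is to work inside the natural parameter space of Markovian precision matrices and to confine imperfectness to the zero sets of finitely many nontrivial polynomials. Let $\Lc = \{A : A = A^{\tpose},\ A_{i,j}=0 \text{ for all } (i,j)\notin G^\circ\}$ be the linear space of symmetric matrices with support contained in $G^\circ$; it has dimension $|G^\circ| = |G|+d$, and every Markovian $A$ lies in the open set $U = \{A \in \Lc : A \succ 0\}$, the further requirement $A_{i,j}\neq 0$ on edges only deleting coordinate hyperplanes. Since the Markovian constraint already pins $A$ to the $|G^\circ|$-dimensional subspace $\Lc$, the substantive claim is that imperfectness is null for Lebesgue measure on $\Lc \cong \reals^{|G^\circ|}$, which is what I would establish (and which I expect the technical version in Section~\ref{sec:construction} makes precise). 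Because $\Sigma = A^{-1}\succ 0$ has $\supp(\Sigma^{-1})=G^\circ$, the Markov property~\eqref{eq:markovian:def} holds automatically, so perfectness can fail only through its converse. Hence $A$ is Markovian but not perfect exactly when there is a triple of disjoint sets with $\neg(\sep(R,C,S))$ in $G$ yet $\rv_{R}\independent \rv_{S}\given \rv_{C}$ in $N(0,\Sigma)$, and there are only finitely many such triples.

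Next I would fix one offending triple $(R,C,S)$ and reduce the Gaussian conditional independence to a polynomial condition on $A$. For a Gaussian, $\rv_{R}\independent \rv_{S}\given \rv_{C}$ holds iff the conditional cross-covariance $\Sigma_{R,S}-\Sigma_{R,C}\Sigma_C^{-1}\Sigma_{C,S}$ vanishes, i.e. iff $\Sigma_{i,j}-\Sigma_{i,C}\Sigma_C^{-1}\Sigma_{C,j}=0$ for every $i\in R,\ j\in S$. The Schur complement identity gives $\det \Sigma_{iC,\,jC} = \det\Sigma_C\cdot(\Sigma_{i,j}-\Sigma_{i,C}\Sigma_C^{-1}\Sigma_{C,j})$, and since $\Sigma_C\succ 0$ this partial covariance vanishes iff $\det\Sigma_{iC,\,jC}=0$. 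Thus the imperfect locus contributed by the triple is contained in $\{A\in U:\det\Sigma_{iC,\,jC}=0\}$ for any single chosen pair $(i,j)\in R\times S$. Expanding $\Sigma=A^{-1}$ by cofactors, each entry of $\Sigma$ is a polynomial in the entries of $A$ divided by $\det A$, so there is a polynomial $Q_{i,j}$ on $\Lc$ with $\det\Sigma_{iC,\,jC}=Q_{i,j}(A)/(\det A)^{|C|+1}$ on $U$; as $\det A\neq 0$ there, the determinant vanishes iff $Q_{i,j}(A)=0$.

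It then remains to exhibit, for each triple, a pair $(i,j)$ with $Q_{i,j}\not\equiv 0$ on $\Lc$, and this is exactly where the construction of Ln{\v{e}}ni{\v{c}}ka and Mat{\'{u}}{\v{s}} enters: it supplies at least one perfect Markovian covariance $\Sigma^\star$, whose precision $A^\star=(\Sigma^\star)^{-1}$ lies in $U\subseteq\Lc$. Perfectness at $A^\star$ means the non-separation $\neg(\sep(R,C,S))$ forces $\neg(\rv_{R}\independent \rv_{S}\given \rv_{C})$, so some pair $(i,j)\in R\times S$ has $\det\Sigma^\star_{iC,\,jC}\neq 0$, whence $Q_{i,j}(A^\star)\neq 0$ and therefore $Q_{i,j}\not\equiv 0$. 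The zero set of a nonzero polynomial on $\reals^{|G^\circ|}$ has Lebesgue measure zero, so the imperfect locus of each triple is null; a finite union over the non-separating triples shows the Markovian-but-not-perfect set is null.

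The main obstacle is the last step, namely certifying that every $Q_{i,j}$ is genuinely nontrivial. What makes this clean is that a \emph{single} perfect point witnesses non-vanishing for all triples simultaneously, so the existence result of Ln{\v{e}}ni{\v{c}}ka--Mat{\'{u}}{\v{s}}, rather than any quantitative control, is precisely the input needed. The remaining work is bookkeeping: fixing a consistent parametrization of $\Lc$ (the ``full parametrization'' promised in the introduction), verifying the Schur-complement reduction and the cofactor clearing of $\det A$, and confirming that the measure is taken on the correct lower-dimensional space $\Lc$ rather than on all of $\reals^{d\times d}$.
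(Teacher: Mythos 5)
Your proof is correct, but it takes a genuinely different route from the paper. You run the classical ``generic point'' argument (the one used by Meek and by Levitz et al.\ for DAGs and chain graphs): cover the imperfect locus by finitely many determinantal hypersurfaces $\{Q_{i,j}=0\}$ in the natural coordinates on the $|G^\circ|$-dimensional space of symmetric matrices supported on $G^\circ$, and certify each $Q_{i,j}\not\equiv 0$ by evaluating at the single perfect point supplied by Ln{\v{e}}ni{\v{c}}ka--Mat{\'u}{\v{s}}, which is used as a black box. The paper instead re-derives and extends that construction: it parametrizes Markovian \normprc matrices by $(\delta,\eps)$ via $A^{G,\delta,\eps}$, proves a combinatorial determinant lemma (Lemma~\ref{lem:cycle}) identifying when $|A^{G,\delta,x}_{iK,jK}|$ vanishes as a polynomial in $x$ in terms of the path sums $\sum_{\Pi\in\Pc^{ij}_t(G)}\delta_\Pi$, and then transfers nullity back to the natural coordinates through Lipschitz and diffeomorphic maps (Steps 1--2 of Section~\ref{sec:proof:thm:perfect:preval} and Lemma~\ref{lem:rescaling:map:null:sets}). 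Your approach is shorter and avoids the Hausdorff-measure bookkeeping and the rescaling lemma, but it yields no description of the exceptional set beyond ``zero sets of some nontrivial polynomials,'' whereas the paper's route produces the explicit algebraic variety~\eqref{eq:strong:path:condition}, the finite exceptional set of $\eps$ per good $\delta$, and the sampling measure $\mu_G$ advertised as byproducts. Two small points to tidy in your write-up: by Jacobi's complementary-minor identity the clearing factor can be taken as $\det A$ rather than $(\det A)^{|C|+1}$ (though your cofactor bookkeeping is also valid), and you should note explicitly that the pair $(i,j)\in R\times S$ is fixed per triple $(R,C,S)$ using the perfect witness, while every imperfect $A$ offending at that triple kills \emph{all} entries of the partial cross-covariance, hence in particular the chosen one --- you do say this, but it is the one place where the order of quantifiers matters.
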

\noindent
 In Theorem~\ref{thm:perfect:perval} (and its corollary below), the Lebesgue measure is of dimension $|G^\circ| = |G| + d$.

 According to the discussion in Section~\ref{sec:Gauss:graph:mod}, $\Sigma$ is Markovian w.r.t. $G$ if $\Sigma^{-1}$ is supported on $G^\circ$.
 It follows that the set of matrices $A \in \reals^{d \times d}$ for which  $\Sigma = A^{-1}$ is Markovian w.r.t. $G$ can be identified with a set in $\reals^{|G^\circ|}$ of positive Lebesgue measure.
Theorem~\ref{thm:perfect:perval} then states that those $A$ in this set whose inverse is not perfect occupy a null subset. An equivalent restatement of the the result in terms of probability distributions with Markovian $\Sigma$ is the following:
 
\begin{cor}\label{thm:perfect:perval:2}
	Let $G$ be an undirected graph on $[d]$, and let $A \in \reals^{d \times d}$ be drawn from a  continuous distribution (w.r.t. the Lebesgue measure) on positive definite matrices with support  $G^\circ$.
	Then with probability one, $\Sigma = A^{-1}$ is perfect w.r.t to $G$.
\end{cor}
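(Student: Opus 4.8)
The plan is to parametrize the Markovian precision matrices, rewrite ``not perfect'' as a finite union of polynomial vanishing conditions, and show each such condition cuts out a Lebesgue-null set by exhibiting a single point where it fails; the witness at that point is supplied by the extended Ln{\v{e}}ni{\v{c}}ka--Mat{\'{u}}{\v{s}} construction.

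First I would fix the parameter space. Let $P$ be the linear space of symmetric $d\times d$ matrices supported on $G^\circ$, so $P\cong\reals^{|G^\circ|}$, and let $P^+=\{A\in P:A\succ 0\}$. Being the intersection of the convex positive definite cone with a linear subspace, $P^+$ is open and connected in $P$, and nonempty (it contains $\mathrm{Id}$). Writing $\Sigma=A^{-1}$, the matrix $A$ is Markovian w.r.t.\ $G$ exactly when $\supp(A)=G^\circ$, and the exceptional configurations where some edge entry vanishes lie in a finite union of coordinate hyperplanes, hence are Lebesgue-null; so it suffices to bound the non-perfect matrices sitting inside $P^+$.

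Next I would encode non-perfectness algebraically. For $A\in P^+$ the Markov direction $\sep\Rightarrow$CI holds automatically (by the discussion preceding the theorem), so $\Sigma=A^{-1}$ fails to be perfect precisely when there is a triple of pairwise disjoint sets $R,C,S\subseteq[d]$ with $\neg(\sep(R,C,S))$ in $G$ yet $\rv_R\independent \rv_S\given \rv_C$. There are only finitely many such triples, so it suffices to treat each separately. For jointly Gaussian variables, $\rv_R\independent \rv_S\given \rv_C$ holds iff the conditional cross-covariance $\Sigma_{R,S}-\Sigma_{R,C}\Sigma_C^{-1}\Sigma_{C,S}$ vanishes entrywise, i.e.\ iff $\det\Sigma_{Cr,Cs}=0$ for every $r\in R$, $s\in S$ (a Schur-complement expansion gives $\det\Sigma_{Cr,Cs}=\det\Sigma_C\cdot(\Sigma_{r,s}-\Sigma_{r,C}\Sigma_C^{-1}\Sigma_{C,s})$, and $\det\Sigma_C>0$). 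Hence the bad set for the triple is contained in $\{A\in P^+:\det\Sigma_{Cr,Cs}=0\}$ for any single chosen pair $(r,s)$. Since the entries of $\Sigma=A^{-1}$ are rational in the entries of $A$ with denominator $\det A>0$ on $P^+$, the map $A\mapsto\det\Sigma_{Cr,Cs}$ is real-analytic on $P^+$ (and, after clearing denominators, proportional to a genuine polynomial in the free coordinates of $A$).

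The crux is the non-degeneracy step. The zero set of a real-analytic function on a connected open set is Lebesgue-null unless the function vanishes identically, so it remains to exhibit, for each offending triple, one $A\in P^+$ and one pair $(r,s)$ with $\det\Sigma_{Cr,Cs}\neq 0$. Here I would invoke — and, following the paper, re-derive through the explicit parametrization extending Ln{\v{e}}ni{\v{c}}ka--Mat{\'{u}}{\v{s}} — the existence of a perfect covariance $\Sigma^*$ for $G$. A perfect $\Sigma^*$ is Markovian, so $A^*=(\Sigma^*)^{-1}\in P^+$; and since $\neg(\sep(R,C,S))$, perfectness forces $\rv_R\not\independent \rv_S\given \rv_C$ under $\Sigma^*$, so some entry of its conditional cross-covariance is nonzero, yielding a pair $(r,s)$ with $\det\Sigma^*_{Cr,Cs}\neq 0$. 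Thus $A\mapsto\det\Sigma_{Cr,Cs}$ is not identically zero, its zero set is null, and the bad set for the triple is null; the finite union over all offending triples remains null, which is Theorem~\ref{thm:perfect:perval}. Corollary~\ref{thm:perfect:perval:2} is then immediate, since a distribution on $P^+$ absolutely continuous w.r.t.\ Lebesgue measure assigns this null set probability zero. The main obstacle is exactly this non-degeneracy: the determinant bookkeeping and the ``analytic zero sets are null'' lemma are routine, whereas producing a witness at which each spurious CI genuinely fails is the substantive content, and it is precisely what the suitably extended Ln{\v{e}}ni{\v{c}}ka--Mat{\'{u}}{\v{s}} parametrization of all Markovian $\Sigma$ is needed to deliver.
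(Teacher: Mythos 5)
Your proposal is correct, but it takes a genuinely different route from the paper. You run the classical strong-completeness argument (the one used for DAGs by Meek and for chain graphs by Levitz--Perlman--Madigan): parametrize the Markovian precision matrices by their free entries on $G^\circ$, observe that this set is a nonempty open convex (hence connected) subset of $\reals^{|G^\circ|}$ on which each map $A \mapsto \det\Sigma_{Cr,Cs}$ is real-analytic (rational with denominator a power of $\det A$), express non-perfectness as a finite union over triples of the simultaneous vanishing of such minors, and kill each piece by exhibiting one point where the relevant minor is nonzero --- the witness being the perfect covariance whose existence is the Ln{\v{e}}ni{\v{c}}ka--Mat{\'{u}}{\v{s}} theorem. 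The paper instead uses the two-block parametrization $A^{G,\delta,\eps}$, a combinatorial determinant lemma (Lemma~\ref{lem:cycle}) characterizing exactly when the minors vanish as polynomials in $\eps$ via cycle sums $\sum_C \delta_C$, and then Hausdorff measure, Fubini, and Lipschitz push-forward arguments. Your route is shorter and needs only the bare existence of a single perfect matrix plus the standard ``zero sets of nontrivial real-analytic functions are null'' fact, but it does not produce the paper's explicit algebraic description of the exceptional set (the variety $\Dc^c$ of~\eqref{eq:strong:path:condition} plus finitely many $\eps$ per good $\delta$) or the sampling measure $\mu_G$, and it outsources the substantive step to the cited existence result rather than rederiving it, as you acknowledge. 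The final step --- that an absolutely continuous law assigns probability zero to a Lebesgue-null set --- is the same in both arguments.
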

Theorem~\ref{thm:perfect:perval} is a consequence of a more technical result, Theorem~\ref{thm:zero:measure:main}, which is discussed in Section~\ref{sec:proof:perfect:perval} and could be of independent interest. One needs a fair amount of technical work to make the notion of ``almost all'' precise. This is done in Section~\ref{sec:proof:perfect:perval} by constructing appropriate measures on a suitable parametrization of the set of covariance matrices that are Markovian  w.r.t. a given graph $G$.
Once done, the same techniques in~\cite{Lnenicka2007} can be extended to show the stronger result as illustrated in the proof of Theorem~\ref{thm:zero:measure:main}. In addition, Theorem~\ref{thm:perfect:perval} further strengthens this result by showing that the notion of ``almost all'' is independent of the particular parametrization of Theorem~\ref{thm:zero:measure:main}.

\section{Construction of null sets}
\label{sec:construction}

 We begin by discussing how to parametrize the space of Markovian covariance matrices. We then characterize the subset of perfect covariance matrices in Theorem~\ref{thm:zero:measure:main}, a result that is interesting in its own right.

\subsection{A parametrization of Markovian covariance matrices}
\label{sec:proof:perfect:perval}

We now give a parametrization of the Markovian covariance matrices that provide a simple way of putting distributions on them. It also allows us to explicitly construct the set of imperfect covariance matrices from pieces that are all Lebesgue null sets.

Let $\sym^d$ be the set of symmetric $d \times d$ matrices,  $\sym_{++}^d$ the set of $d \times d$ positive definite matrices, and define 
 \[\sym_{++,1}^d = \{\Gamma\in \sym_{++}^d:\; \Gamma_{i,i} = 1,\; i \in [d] \}. \]
  Matrices in $\sym_{++,1}^d$ are often called correlation matrices. Since we use this normalization mainly for precision matrices, to avoid confusion, we  call elements of $\sym_{++,1}^d $ \normprc matrices. It is not hard to see that 
for any diagonal matrix $D\in \sym_{++}^d$, the two matrices $D \Sigma D$ and $\Sigma$ have the same Markov properties.  Thus, it is enough to focus on the case where $(\Sigma^{-1})_{i,i} = 1$ for all $i \in [d]$. We will make the following shorthand:
\begin{defn}
	A matrix $\Sigma$ is called a \normcov matrix if $\Sigma^{-1} \in  \sym_{++,1}^d$, i.e., its inverse is a normalized precision matrix.
\end{defn}

Given any graph $G$ on nodes $[d]$, our first step is to construct a uniform measure over all \normcov matrices that are Markovian w.r.t. $G$.  We then show that with probability one, such \normcov matrices are perfect. Later, we will show how the result extends to all covariance matrices Markovian w.r.t. $G$ (see Step~2 in the proof of Theorem~\ref{thm:perfect:perval}).
The class of \normcov matrices that are Markovian w.r.t. $G$ can be written as
\begin{align*}
\Psi_G^{-1} := \{\Gamma^{-1}:\; \Gamma \in \Psi_G\}, \quad \text{where} \quad \Psi_{G}:= \{\Gamma \in \sym_{++,1}^d:\; \Gamma_{i,j} \neq 0 \iff ij \in G\}.
\end{align*}
$\Psi_G$ is just the set of \normprc matrices with support  $G$. The first step in our approach is to put a distribution on $\Psi_G^{-1}$ as the push-forward of a distribution constructed on $\Psi_G$. Although our construction is not uniform w.r.t. the Lebesgue measure, in Corollary~\ref{thm:perfect:perval:2} we extend the result to any distribution on $\Psi_G$ which is absolutely continuous w.r.t. the Lebesgue measure.

Before describing our construction of a random \normcov matrix, let us set up some more notation. We let $\Lc^n$ and $\Hc^s$  ($s > 0$) denote, respectively, the Lebesgue measure and the $s$-dimensional Hausdorff measure on $\reals^n$. The dimension of the ambient space of $\Hc^s$ will be clear from the context.  For the graph $G$, let $g = |G|$ be the number of edges.  We often identify $\Psi_G$ with a subset of $\reals^G$, and often identify $\reals^G$ 
with $\reals^g$, after ordering the edges, the particular order being unimportant. For example, if $G$ is  $1 - 2 -3$, with $g = |G| = 2$, and $\Gamma \in \Psi_G$ is
\begin{align*}
\Gamma = \begin{pmatrix}
1 & \delta_{12} & 0 \\
\delta_{12} & 1 & \delta_{23} \\
0 & \delta_{23} & 1\\
\end{pmatrix},
\end{align*}
we either view $\Gamma$ as $\{\delta_{12},\delta_{23}\} = (\delta_{ij}, ij \in G)$, as an element of $\reals^G$, or as the ordered pair $(\delta_{12},\delta_{23})$  as an element of $\reals^g = \reals^2$.

For $\delta = (\delta_{ij},\; ij \in G) \in \reals^{G}$ and $\eps > 0 $, define $A^{G,\delta,\eps} = (a_{ij}^{G,\delta,\eps}) \in \reals^{d \times d}$  by setting 
\begin{align}\label{eq:A:G:delta:eps}
a_{ij}^{G,\delta,\eps} = \begin{cases}
\delta_{ij} \,\eps\, & ij \in G \\
1 & i=j \\
0 & \text{otherwise} \end{cases}.
\end{align}
For a fixed $\delta \in \reals^{G}$, let $\eps_G(\delta)$ be the largest $\eps > 0$ such that $A^{G,\delta,\eps}$ is positive definite, that is,
\begin{align}
\label{eq:def:eps}
\eps_G(\delta) := \sup \{\eps > 0:\; A^{G,\delta,\eps} \in \sym_{++}^d\}. 
\end{align}
Then $A^{G,\delta,\eps}$ is positive definite for all $\eps\in[0,\eps_G(\delta))$, due to the convexity of $\sym_{++}^d$.
Let $[-1,1]_* :=[-1,1]\setminus \{0\}$ and consider 
\begin{align*}
\Mc^G := \big\{\, (\delta,\eps): \;\;\delta \in [-1,1]_*^{G}, \; \;\eps \in (0,\eps_G(\delta)) \, \big\}.
\end{align*}
The set $\{(A^{G,\delta,\eps})^{-1}: \; (\delta,\eps) \in \Mc^G\}$ is a parametrization of the set of all \normcov  matrices that are Markovian w.r.t. $G$. In other words, with the map $\zeta:\Mc^G\to\reals^{d \times d}$ \begin{align}\label{eq:zeta:def}
\zeta(\delta,\eps) = (A^{G,\delta,\eps})^{-1},
\end{align}
we have $\zeta(\Mc^G) = \Psi_G^{-1}$.
We note that $\Mc^G$ is a subset of $[-1,1]^G \times (0,\infty) \subset \reals^G \times \reals \simeq \reals^{g+1}$. We will equip $\Mc^G$ with the Lebesgue measure (i.e., $\Lc^{g+1}$).

The map $\zeta$ overparametrizes the set $\Psi_G^{-1}$ since $\zeta(c\delta,\eps/c)$ is the same for all $c>0$, i.e. it defines the same \normcov matrix for all $c>0$.
To remove this ambiguity (and to avoid unnecessary complications in working with equivalence classes), without loss of generality, we focus on the subset of $\Mc^G$ for which $\delta$ has unit $\ell_\infty$ norm. Let $ \sphere^G_{\infty} := \{\delta \in \reals^G:\; \infnorm{\delta} = 1\}$, $\sphere^G_{\infty,*} = [-1,1]_*^{G}\cap \sphere^G_\infty$, and
\begin{align*}
\Mc_\infty^G := \Mc^G \cap (\sphere_\infty^G \times \reals) = \big\{\, (\delta,\eps): \;\delta \in \sphere^G_{\infty,*},  \;\eps \in (0,\eps_G(\delta)) \, \big\}.
\end{align*}
The function $\eps_G$, restricted to $\sphere^G_{\infty,*}$, is continuous and bounded. In fact, $\sup \eps_G(\sphere^G_{\infty,*}) = 1$ so that $\Mc_\infty^G \subset [-1,1]^G \times (0,1]$. Hence $\Mc_\infty^G$ has finite and positive $\Hc^{g}$-measure (on $\reals^{g+1}$), where $g := |G|$. 
\begin{defn}\label{def:muG}
Equip $\Mc_\infty^G$ with the normalized  $\Hc^{g}$-measure, which defines a uniform probability distribution $\mu_G$ on  $\Mc_\infty^G$. It is equivalently described as follows:  Pick $\delta'$ by drawing each entry uniformly from $[-1,1]_*$, and given $\delta'$, set $\delta = \delta' / \infnorm{\delta'}$ and  draw $\eps$ uniformly from $[0,\eps_G(\delta)]$; the vector $(\delta,\eps)$ has the desired distribution. See Figure~\ref{fig:mu_G}.
\end{defn}

\begin{figure}
	\centering 
\includegraphics[height=1.75in]{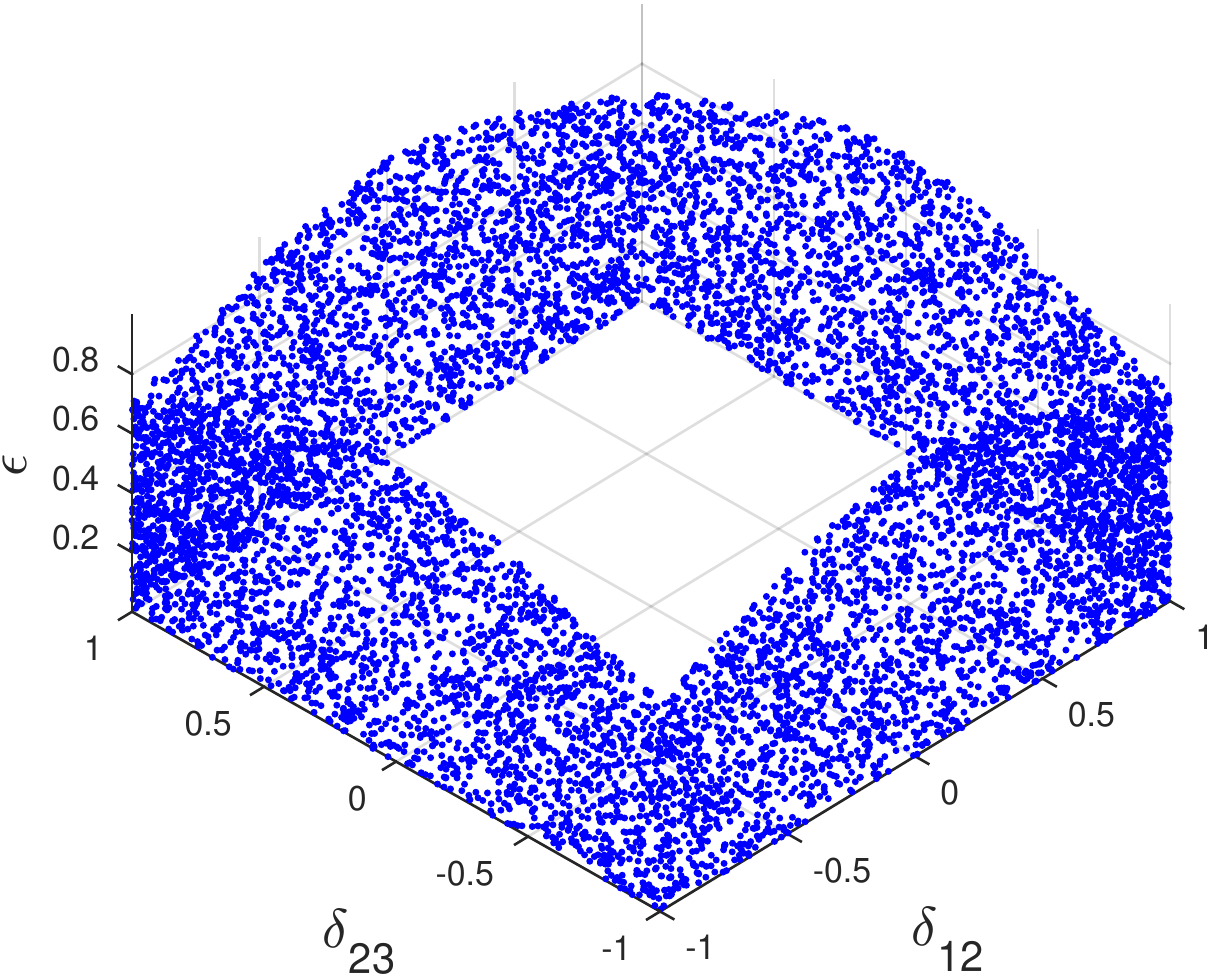}\qquad
	\includegraphics[height=1.75in]{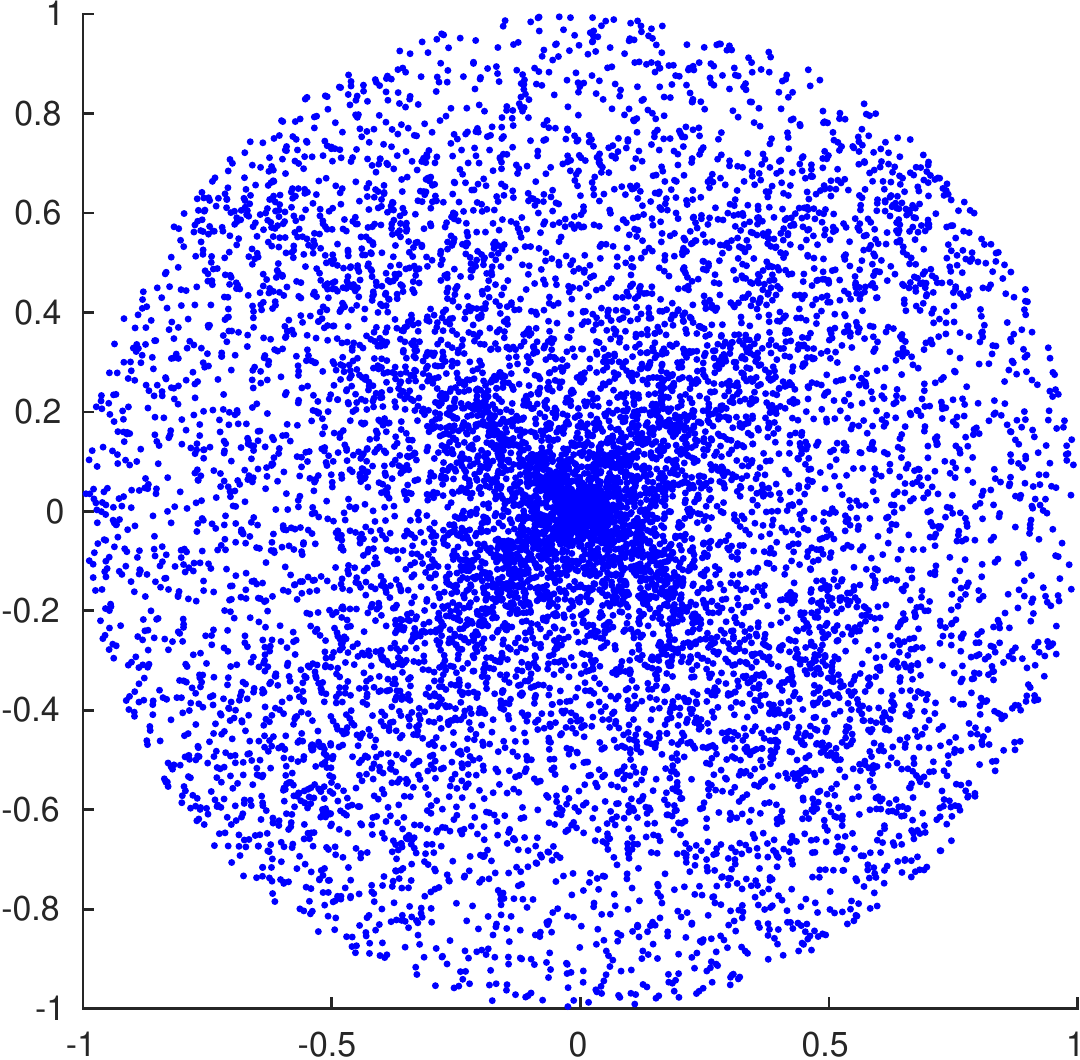}
	\caption{(Left) The plot of the samples from the uniform distribution  $\mu_G$  on $\Mc_\infty^G$ for the graph $G = 1 - 2 - 3$. We have $\eps_G((\delta_{12},\pm 1)) = 1/\sqrt{1+\delta_{12}^2}$ and similarly for $\eps_G((\pm 1, \delta_{23}))$. Note that the distribution is supported on $\sphere_\infty^2 \times [0,1]$. It is singular w.r.t. $\Lc^3$ but absolutely continuous w.r.t. $\Hc^2$. (Right) Pushforward of $\mu_G$ by the map $F$ defined in Step~1 in the proof of Theorem~\ref{thm:perfect:perval}. (cf.~Section~\ref{sec:proof:thm:perfect:preval})}
	\label{fig:mu_G}
\end{figure}
Restricted to $\Mc_\infty^G$, the map $\zeta$ defined earlier is well-behaved: It is one-to-one and onto
$\Psi_G^{-1}$, that is, $\zeta: \Mc_\infty^G \to \Psi_G^{-1}$ is a  bijection.
We can now put a distribution on \normcov matrices, $\Psi_G^{-1}$, as the push-forward of $\mu_G$ by $\zeta$.

\subsection{Characterizing imperfect covariance matrices}

Let us now consider the subclass of $\Psi_G^{-1}$ which is imperfect. It is enough to work with the corresponding subsets in $\Mc^G$ and $\Mc^G_\infty$:
\begin{align*}
\Np^G &= \{(\delta,\eps) \in \Mc^G:\; (A^{G,\delta,\eps})^{-1} \; \text{is not perfect} \},\\
\Np^G_\infty &=  \{(\delta,\eps) \in \Mc_\infty^G:\; (A^{G,\delta,\eps})^{-1} \; \text{is not perfect} \} = \Nc^G \cap (\sphere_\infty^G \times \reals).
\end{align*}
The following result is the key component on which Theorem~\ref{thm:perfect:perval} is based: \begin{thm}\label{thm:zero:measure:main}
	For $\delta \in [-1,1]_*^G$, let 
$B_\delta:= \{\eps:\; (\delta, \eps) \in \Nc^G \}$,
and let $g := |G| \ge 2$. 
There exists a set $\Dc \subset [-1,1]_*^G$ such that
	\begin{itemize}
		\item[(a)] $\Dc^c$ is an $\Lc^g$-null set, 
		\item[(b)] $\sphere^G_\infty \cap \Dc^c$  is an $\Hc^{g-1}$-null set, and
		\item[(c)] for every $\delta \in \Dc$, $B_\delta$ is finite. \end{itemize}
	In particular, (d) the set $\Nc^G$ is a $\Lc^{g+1}$-null set,
and $\Nc^{G}_\infty$ is a $\Hc^g$-null set, i.e. $\mu_G(\Nc^{G}_\infty)=0$.
\end{thm}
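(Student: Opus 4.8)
The plan is to reduce imperfectness to the vanishing of finitely many polynomials in the parameters $(\delta,\eps)$, and then to show that each such polynomial is not identically zero by extending the walk-sum analysis of \cite{Lnenicka2007} so as to track the dependence on $\delta$. First I would record that every $(\delta,\eps)\in\Mc^G$ yields a $\Sigma=(A^{G,\delta,\eps})^{-1}$ that is genuinely Markovian w.r.t.\ $G$ (its inverse has support exactly $G^\circ$ because $\delta\in[-1,1]_*^G$ has no zero entries), so the Markov direction of perfectness is automatic and only spurious conditional independences can occur. Using the decomposition property of Gaussian CI together with the definition of separation, $(A^{G,\delta,\eps})^{-1}$ fails to be perfect iff there is a triple $(i,j,C)$ with $i\neq j$, $C\subseteq[d]\setminus\{i,j\}$, $\neg\sep(i,C,j)$, and $X_i\independent X_j\given X_C$ in $N(0,\Sigma)$. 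For a Gaussian this last CI is equivalent to the vanishing of the partial covariance $\Sigma_{i,j}-\Sigma_{i,C}\Sigma_C^{-1}\Sigma_{C,j}=\det(\Sigma_{iC,jC})/\det(\Sigma_C)$, and since $\det(\Sigma_C)>0$ it is equivalent to $\det(\Sigma_{iC,jC})=0$. Clearing the common denominator $\det(A^{G,\delta,\eps})$ (equivalently, invoking Jacobi's complementary-minor identity to rewrite $\det(\Sigma_{iC,jC})$ as $\pm\det(A^{G,\delta,\eps}_{iD,jD})/\det(A^{G,\delta,\eps})$ with $D:=[d]\setminus(\{i,j\}\cup C)$) produces an honest polynomial $P_{i,j,C}(\delta,\eps)$ whose zero set is exactly where the CI holds. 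Thus $\Nc^G=\bigcup_{(i,j,C)}\{P_{i,j,C}=0\}\cap\Mc^G$, a finite union.

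The crux is the following claim, which is the heart of the argument and the place where I expect the real work to lie: \emph{for every bad triple $(i,j,C)$ the polynomial $P_{i,j,C}$ is not identically zero; more precisely, its lowest-order coefficient in $\eps$ is a nonzero multilinear polynomial in $\delta$.} Writing $A^{G,\delta,\eps}=I+\eps W$ with $W$ the $\delta$-weighted adjacency matrix, the permutation expansion of $\det(A^{G,\delta,\eps}_{iD,jD})$ decomposes into an $i$-to-$j$ chain through vertices of $D=[d]\setminus(\{i,j\}\cup C)$ together with cycles inside $D$; each cycle costs at least two edge factors, so the lowest power of $\eps$ is attained only by pure $i$-to-$j$ simple paths using intermediate vertices outside $C$, of minimal length $\ell$. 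Because $\neg\sep(i,C,j)$, at least one such $C$-avoiding path exists, so $\ell<\infty$ and the coefficient of $\eps^\ell$ equals, up to sign, $\sum_{\pi}\prod_{e\in\pi}\delta_e$ over shortest $C$-avoiding paths $\pi$. Distinct simple paths have distinct edge sets and hence contribute distinct squarefree monomials, which therefore cannot cancel; the coefficient is a nonzero multilinear polynomial in $\delta$. This is exactly the walk-sum mechanism of \cite{Lnenicka2007}, extended here by keeping $\delta$ symbolic rather than evaluating at a single favorable point.

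With the claim in hand I would define $\Dc:=\{\delta\in[-1,1]_*^G:\ c_{i,j,C}(\delta)\neq 0\ \text{for every bad }(i,j,C)\}$, where $c_{i,j,C}$ denotes the nonzero leading coefficient above, so that $\Dc^c=\bigcup_{(i,j,C)}\{c_{i,j,C}=0\}$. Part (a) is then immediate: each $c_{i,j,C}$ is a nonzero polynomial, so $\{c_{i,j,C}=0\}$ is a proper algebraic subset of $\R^g$ and hence $\Lc^g$-null, and a finite union of null sets is null. Part (b) uses the multilinearity crucially: a nonzero multilinear polynomial cannot vanish on the whole $\ell_\infty$-sphere $\sphere^G_\infty$, since vanishing on the face $\{\delta_e=\pm1\}$ would force $(\delta_e\mp1)$ to divide a polynomial of degree at most one in $\delta_e$, and both signs cannot occur unless the polynomial is zero; therefore $\sphere^G_\infty\cap\{c_{i,j,C}=0\}$ is a proper algebraic subset of each $(g-1)$-dimensional face and is $\Hc^{g-1}$-null, giving (b). Part (c) is the univariate consequence: for $\delta\in\Dc$ the leading coefficient of $P_{i,j,C}(\delta,\cdot)$ is nonzero, so $P_{i,j,C}(\delta,\cdot)$ is a nonzero polynomial in $\eps$ with finitely many roots, and $B_\delta$ is a finite union over the finitely many bad triples of these finite root sets, hence finite.

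Finally I would assemble (d) from (a)--(c) by integrating in the $\eps$-direction. For the Lebesgue statement, Fubini gives $\Lc^{g+1}(\Nc^G)=\int_{[-1,1]_*^G}\Lc^1(B_\delta)\,d\delta$; the integrand vanishes on the co-null set $\Dc$ by (c), so the integral is $0$. For the Hausdorff statement I would use the product description of $\mu_G$ in Definition~\ref{def:muG}: sampling $\delta\in\sphere^G_{\infty,*}$ and then $\eps$ uniformly on $(0,\eps_G(\delta))$ realizes the normalized $\Hc^g$-measure on $\Mc^G_\infty$, so $\mu_G(\Nc^G_\infty)=\int_{\sphere^G_{\infty,*}}\Lc^1\big(B_\delta\cap(0,\eps_G(\delta))\big)/\eps_G(\delta)\,d\nu(\delta)$ for the induced surface measure $\nu\ll\Hc^{g-1}$; by (c) the numerator is $0$ for $\delta\in\sphere^G_\infty\cap\Dc$, and by (b) the remaining $\delta$ form a $\nu$-null set, whence $\mu_G(\Nc^G_\infty)=0$, equivalently $\Hc^g(\Nc^G_\infty)=0$. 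The main obstacle throughout is the leading-coefficient claim of the second paragraph---establishing that the lowest-order $\eps$-term is governed exactly by shortest $C$-avoiding paths and survives as a nonzero multilinear polynomial in $\delta$; the reduction to elementary CI, the passage to polynomials, and the measure-theoretic bookkeeping are comparatively routine, though it is the multilinearity that makes the sphere-restricted statement (b) go through.
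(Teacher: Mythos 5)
Your overall strategy is the paper's own: reduce imperfectness to pairwise statements via the decomposition property, pass from minors of $\Sigma$ to complementary minors of $A^{G,\delta,\eps}$ via Jacobi's identity (this is exactly the dual-relation device $\ipp{A}^{\dual}=\ipp{A^{-1}}$ used in the paper), expand the non-principal minor over permutations so that the lowest-order term in $\eps$ is a signed sum of $\delta_\Pi$ over shortest $C$-avoiding $ij$-paths, define $\Dc$ as the set where these path-sums do not vanish, and finish with Fubini. Your observation that all shortest paths carry the same sign and contribute distinct squarefree monomials is precisely what makes the coefficient a nonzero polynomial; the paper packages the same computation as Lemma~\ref{lem:cycle} (via the auxiliary graph $H$ obtained by identifying $i$ and $j$ with a single node), and your version, which isolates only the minimal-degree coefficient rather than running the induction over all cycle lengths, is a legitimate streamlining. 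Parts (a), (c) and (d) are handled correctly.

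There is, however, a genuine gap in your argument for part (b). You argue that a nonzero multilinear polynomial cannot vanish on the \emph{whole} sphere $\sphere^G_\infty$, because $(\delta_e-1)$ and $(\delta_e+1)$ cannot both divide a polynomial of degree at most one in $\delta_e$. But (b) requires $\Hc^{g-1}(\sphere^G_\infty\cap\{c_{i,j,C}=0\})=0$, which demands that $c_{i,j,C}$ restrict to a \emph{nontrivial} polynomial on \emph{each individual} $(g-1)$-dimensional face $\{\delta_e=\pm1\}$; your divisibility argument only rules out simultaneous vanishing on a pair of opposite faces, and a general nonzero multilinear polynomial (e.g.\ $\delta_e-1$ itself) can certainly vanish identically on a single face, which would give that face positive $\Hc^{g-1}$-measure inside $\Dc^c$. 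The claim is nonetheless true for your specific $c_{i,j,C}$, but the reason is homogeneity rather than multilinearity alone: all contributing monomials $\delta_\Pi$ have the same degree $\ell$ (the common length of the shortest $C$-avoiding paths) and coefficient $+1$ up to a global sign, so after substituting $\delta_e=\pm1$ the monomials that contained $\delta_e$ drop to degree $\ell-1$ while the others stay at degree $\ell$; since distinct simple paths of equal length have distinct edge sets, all surviving monomials remain pairwise distinct and no cancellation can occur, so the restriction is nontrivial. This is the same fact the paper invokes when it asserts that $F_{ij}^{+}\cap\Dc^c$ is a finite union of zero sets of nontrivial polynomials in the $g-1$ remaining variables; with that substitution argument supplied, your proof of (b), and hence of the theorem, is complete.
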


Theorem~\ref{thm:zero:measure:main} is proved in Section~\ref{sec:proof:zero:measure}. To gain some intuition for this result, consider the example $G=1-2-3$ illustrated in Figure~\ref{fig:mu_G}. Theorem~\ref{thm:zero:measure:main} says that there is a ``good'' set $\Dc$ of $(\delta_{12},\delta_{23}) \in [-1,1]_*^2$ which has full 2-dimensional measure; its boundary (i.e., the intersection with the perimeter of the square $[-1,1]_*^2$) has full 1-dimensional measure. Moreover, for any $(\delta_{12},\delta_{23}) \in \Dc$, at most finitely many $\eps$ are problematic, that is, they lead to a imperfect covariance matrix via~\eqref{eq:zeta:def}.

As part of the proof of Theorem~\ref{thm:zero:measure:main}, we give an explicit construction of the set $\Dc^c$ as an algebraic variety in $\reals^g$. See~\eqref{eq:strong:path:condition} in Section~\ref{sec:proof:zero:measure}. In addition,
  the measure $\mu_G$ (cf.~Definition~\ref{def:muG}) is itself interesting. 
Using $\mu_G$ as a trial or proposal distribution in rejection sampling or Markov chain Monte Carlo, it gives a device, at least in principle, to generate a random sample from any continuous distribution  over (properly normalized) covariance matrices that are Markovian w.r.t. any given graph $G$. The only computational burden in simulating from $\mu_G$ is computing $\eps_G(\delta)$, which can be obtained by solving a convex optimization problem.

\section{Proof of main results} 
\label{sec:proofs}
In this section, we provide the proofs of the two main theorems. We first show how Theorem~\ref{thm:zero:measure:main} implies Theorem~\ref{thm:perfect:perval}. The rest of the section is then focused on proving Theorem~\ref{thm:zero:measure:main}. The main component is Lemma~\ref{lem:cycle} whose proof is deferred to Section~\ref{sec:proof:cycle}. 
\subsection{Proof of Theorem~\ref{thm:perfect:perval}}\label{sec:proof:thm:perfect:preval}

	Recall the  identification of $\Psi_G$ with a subset of $\reals^G \simeq \reals^g$. The cases $g=0$ and $g=1$ are trivial, so we will assume $g \ge 2$. We proceed in two steps:
	
	\medskip
	\noindent \textbf{Step 1.} 
    The first step is to show that almost all \normprc matrices supported on $G^\circ$ lead to perfect covariance matrices after inversion.
	Consider the following subset of \normprc matrices, \begin{align}
	N = \{\Gamma \in \Psi_G:\; \text{$\Gamma^{-1}$ is not perfect}\}.
	\end{align}
We wish to show that $\Lc^g(N)=\Hc^g(N)=0$.
Let $F:\reals^G \times \reals \to \reals^G$ be the map given by $F(\delta,\eps) = (\delta_{ij} \eps,\, ij \in G)$. Since $F$ is a $C^1$ map, it is locally Lipschitz over $\reals^{|G|+1}$, hence  Lipschitz over $\Mc_\infty^G$.
It is well-known that a Lipschitz map (between two metric spaces) maps $\Hc^s$-null sets to $\Hc^s$-null sets,
for any $s > 0$; see for example~\cite[Proposition 2.4.7]{krantz2008geometric}.
Since $\Hc^g(\Nc_\infty^G) = 0$ according to Theorem~\ref{thm:zero:measure:main}, it follows that $\Hc^{g}( F (\Nc_\infty^G)) = 0$. 
Recalling the identification of  $\Psi_G$ with a subset of $\reals^G$, and that $F: \Mc^G_\infty \to \Psi_G$ is a bijection, we have $F (\Nc_\infty^G) \simeq N$, that is, $\Hc^g(N) = 0$.
	
	\medskip    
	\noindent \textbf{Step 2.} 
    The second step is to extend the previous result for \normprc matrices to general positive definite matrices. Let $\reals_{++}$ be the set of positive reals
and let $\xi: \reals_{++}^d \times \reals^{G} \to \reals^G$ be given by 
	\begin{align}\label{eq:rescaling:map:defn}
	\xi : \;\big( x_k, k \in [d];\; y_{ij}, \; ij \in G \big) \mapsto 
	\Big(\frac{y_{ij}}{\sqrt{x_i x_j}},\, ij \in G \Big).
	\end{align}
	This map should be thought of as mapping a general positive definite (PD) matrix, with support $G^\circ$, to its corresponding \normprc matrix (ignoring the diagonal of all ones). We claim that the push-forward of $\Lc^{g+d}$ by $\xi$ is absolutely continuous w.r.t. to $\Hc^g = \Lc^g$ on $\reals^{G} \simeq \reals^{g}$; see Lemma~\ref{lem:rescaling:map:null:sets} in Section~\ref{sec:aux:lemmas} for a proof. Combined with the result that $\Hc^{g}( F (\Nc_\infty^G)) = 0$ of Step~1, this implies $\Lc^{g+d}(\xi^{-1}(N)) = 0$. But $\xi^{-1}(N)$ is the set of all PD matrices with support $G^\circ$ that are not perfect. The proof is complete.

\subsection{Proof of Theorem~\ref{thm:zero:measure:main}}\label{sec:proof:zero:measure}

Let us introduce some notation, most of which is borrowed from~\cite{Lnenicka2007} with minor modifications. Recall also our subsetting and indexing notations from Section~\ref{sec:Gauss:graph:mod}.

An \emph{$i j$-path} on $[d]$, of length $t+1$, is an ordered sequence $i_0 \to i_1 \to i_2 \dots \to i_t \to i_{t+1}$, where $i_j, j =0,\dots,t+1$ are distinct elements of $[d]$, $i_0= i$ and $i_{t+1} = j$. 
We  represent such a path as an ordered subset $\Pi = \{i_0,i_1\dots,i_{t+1}\}$ of $[d]$. An \emph{$i_0$-cycle} on $[d]$, of length $t+1$, is an $i_0 i_0$-path; that is, an ordered sequence of the form $i_0 \to i_1 \to i_2 \dots \to i_t \to i_0$, where $i_j, j =0,\dots,t$ are distinct elements of $[d]$.  We will represent such a cycle as an ordered subset $C = \{i_0,i_1\dots,i_t\}$ of $[d]$.  
Ultimately, the $ij$-paths and $i_0$-cycles will be used to represent non-intersecting paths and cycles on a graph $G$ on nodes $[d]$.

From here on, we consider the edges of a graph $G$ to be directed, i.e., ordered pairs of nodes. An undirected edge $ij\in G$ is interpreted as bidirected, i.e. $\{i,j\}\in G$ and $\{j,i\}\in G$.
We say that an $ij$-path $\Pi = \{i=i_0,i_1\dots,i_{t+1}=j\}$ belongs to $G$, denoted as  $\Pi \in G$, if all the edges in the path belong to $G$, that is, $i_{j} i_{j+1} \in G$ for all $j=0,\dots,t$. The set of $ij$-paths that belong to $G$ is denoted as $\Pc^{ij}(G)$. With some abuse of notation, we let $\Pc^{ij} = \Pc^{ij}([d])$ denote the set of all $ij$-paths on $[d]$ in the complete graph.
The set of all $ij$-paths of $G$ of length $t+1$ is denoted as $\Pc_{t}^{ij}(G)$, that is
\begin{align*}
\Pc^{ij}_t(G) := \{\Pi \in \Pc^{ij}:\; \Pi \in G,\; |\Pi| = t+1 \}.
\end{align*}
We let $\Pc_t(G) := \bigcup_{i,j \in [d]} \Pc_t^{ij}(G)$, the set of all paths of length $t+1$ in $G$.
The parallel notations for $i$-cycles, namely 
\begin{align}
\Cc^i(G), \;\;\Cc^i = \Cc^i([d]),\;\; \Cc^i_t(G), \;\; \text{and} \; \;\Cc_t(G)
\end{align}
are defined similarly (by setting $i=j$ in the corresponding definitions for paths). Note that in our notation, an undirected edge $ij \in G$, with $i\neq j$, is considered a valid cycle $\{i,j\}$ of length $2$, since both   $i\to j$ and $j\to i$ are in $G$.

For an $ij$-path $\Pi = \{i_0=i,i_1,\dots,i_{t},j=i_{t+1}\} \in \Pc^{ij}$ and a matrix $B = (b_{i,j}) \in \reals^{d \times d}$, let 
\begin{align}\label{eq:b:Pi}
b_\Pi =  \prod_{j=0}^t b_{i_j,i_{j+1}}.
\end{align}
If, instead, $b \in \reals^G$ and $\Pi$ is a path that belongs to $G$, then $b_\Pi$ as above is still well-defined (i.e., we do not need $b$ to be defined outside $G$.) A similar notation, namely $b_C$, is well-defined when $C$ is an $i_0$-cycle. (In this case, $i_{t+1} = i_0$ in~\eqref{eq:b:Pi}.)

\smallskip
The proof of Theorem~\ref{thm:zero:measure:main} relies on the following key technical lemma, which is as an extension of Lemma~4 in~\cite{Lnenicka2007} and is proven in Section~\ref{sec:proof:cycle}. Here, we treat node $1$ specially, hence the emphasis on the collection of $1$-cycles (cycles which begin and end on node $1$) of a given length $t+1$, namely $\Cc^1_t(G)$. The special role given to $1$ becomes clear in the proof of Theorem~\ref{thm:zero:measure:dual} below, where in dealing with an $ij$-path of $G$, we identify the endpoints with node 1 of a new graph $H$, hence obtaining a $1$-cycle of $H$. 

In the sequel, $\complex[x]$ is the set of polynomials in the indeterminate variable $x$ with complex coefficients. For $p \in \complex[x]$, we say that $p=0$ in $\complex[x]$, or $p(x) = 0$ in $\complex[x]$, if $p$ is the zero polynomial (i.e., all its coefficients are zero). For a square matrix $B$, $|B|$ denotes its determinant. 

\begin{lem}\label{lem:cycle}
	Consider a (directed) graph $H$ on $[r]$ with no self-loops on any node except possibly node $1$.
Let $\delta_{i,j}$ for $i,j\in[r]$ be a collection of nonzero real numbers.
Define a matrix $B(x) = (b_{i,j}(x)) \in \reals^{r \times r}$ by 
\begin{align*}
	b_{i,i} = 1, \; \forall i > 1, \qquad \text{and}\qquad b_{i,j}(x) = \delta_{i,j} \,x\, 1\big\{\{i,j\} \in H\big\},\;\; \text{for $i\neq j$ and $i=j=1$},
	\end{align*}
	treating $b_{i,j}(x)$ as a polynomial in $\complex[x]$. The following two statements hold: 
	\begin{enumerate}[(a)]
		\item  $|B(x)| = 0$ in $\complex[x]$ if $\Cc_t^1(H) = \emptyset$ for all $t \ge 0$.
		\item Assume further that for any $0\leq t <r$, 
		\begin{align}\label{eq:cycle:conditionexact}
		\sum_{C \,\in\, \Cc_t^1(H)} \delta_C \neq 0\quad \text{whenever $\Cc_t^1(H)$ is nonempty}.
		\end{align}
		Then, $|B(x)| = 0$ in $\complex[x]$ implies $\Cc_t^1(H) = \emptyset$ for all $t \ge 0$.
	\end{enumerate}
	
\end{lem}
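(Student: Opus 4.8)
The plan is to expand $|B(x)|$ with the Leibniz formula and extract its lowest-order coefficient as a polynomial in $x$. Writing
\[
|B(x)| = \sum_{\sigma} \mathrm{sgn}(\sigma) \prod_{i=1}^r b_{i,\sigma(i)}(x),
\]
the sum running over all permutations $\sigma$ of $[r]$, I would first argue that $\sigma$ contributes a nonzero term only when node $1$ lies on a directed cycle of $H$. Indeed, a fixed point $i>1$ contributes the harmless factor $b_{i,i}=1$; but node $1$, when fixed, contributes $b_{1,1}(x)=\delta_{1,1}x\,1\{\{1,1\}\in H\}$, which vanishes unless $H$ has a self-loop at $1$. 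Hence in any nonzero term the $\sigma$-orbit of node $1$ is either a self-loop (a $1$-cycle of length $1$) or a genuine cycle $1\to\sigma(1)\to\cdots\to 1$ all of whose edges lie in $H$; in every case this orbit is an element of $\Cc^1_t(H)$ for some $t$, while the remaining nodes are permuted among themselves.

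This observation settles (a) at once: if $\Cc^1_t(H)=\emptyset$ for all $t$, then every term in the Leibniz sum carries a vanishing factor at node $1$, so $|B(x)|$ is the zero polynomial of $\complex[x]$.

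For (b) I would track the degree in $x$. Grouping terms by the $\sigma$-orbit $C$ of node $1$ yields the cofactor-type identity
\[
|B(x)| = \sum_{t\ge 0}(-1)^t x^{t+1}\sum_{C\in\Cc^1_t(H)} \delta_C\,\big|B(x)_{[r]\setminus C}\big|,
\]
where the factor $x^{t+1}$ and sign $(-1)^t$ arise from a cycle on $t+1$ nodes, $\delta_C$ is the product of the $\delta$'s along $C$ (as in~\eqref{eq:b:Pi}), and the residual determinant $|B(x)_{[r]\setminus C}|$, being over nodes $>1$, has $1$'s on its diagonal and therefore constant term $1$. Let $t_0$ be the smallest $t$ with $\Cc^1_{t}(H)\ne\emptyset$; this exists whenever some $\Cc^1_t(H)$ is nonempty, and $t_0+1\le r$ since a cycle uses distinct nodes. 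The lowest power of $x$ occurring in $|B(x)|$ is then $x^{t_0+1}$: node $1$ must occupy a $1$-cycle of length at least $t_0+1$, and any further nontrivial cycle only raises the degree, so the minimum is attained exactly when node $1$ sits on a minimal $1$-cycle and all other nodes are fixed. Reading off this coefficient gives
\[
[x^{t_0+1}]\,|B(x)| = (-1)^{t_0}\sum_{C\in\Cc^1_{t_0}(H)} \delta_C,
\]
which is nonzero by hypothesis~\eqref{eq:cycle:conditionexact} (applicable since $t_0<r$). Thus $|B(x)|$ is a nonzero polynomial, and by contraposition $|B(x)|=0$ in $\complex[x]$ forces $\Cc^1_t(H)=\emptyset$ for all $t$.

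The main obstacle I anticipate is the bookkeeping at the lowest degree: one must verify that the coefficient of $x^{t_0+1}$ receives contributions only from single minimal $1$-cycles with all other nodes fixed, so that there is no hidden cancellation either from shorter structures (there are none, by minimality of $t_0$) or from the residual determinants (whose subleading terms start at degree $2$, as nodes $>1$ have no self-loops, so $|B(x)_{[r]\setminus C}| = 1 + O(x^2)$). Getting the cycle signs right for each length, and keeping in mind that both orientations of an undirected cycle appear as distinct directed $1$-cycles inside $\sum_C\delta_C$ — which is precisely why~\eqref{eq:cycle:conditionexact} is stated as a nonvanishing \emph{sum} rather than termwise — is the delicate part of the argument.
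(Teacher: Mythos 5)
Your proof is correct and follows essentially the same route as the paper: both expand $|B(x)|$ by the Leibniz formula and identify the coefficient of the lowest relevant power of $x$ with $\pm\sum_{C\in\Cc^1_{t}(H)}\delta_C$, which hypothesis~\eqref{eq:cycle:conditionexact} forces to be nonzero. The only difference is organizational---the paper runs an induction on the length of each permutation's $1$-cycle, whereas you argue by contraposition at the minimal length $t_0$ via a cofactor expansion along the $1$-cycle; this is a slightly cleaner packaging of the same degree-counting argument.
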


Note that in the case $t = 0$, $\Cc_t^{1}(H)$ is nonempty only if $H$ has a self-loop on node $1$. 
Following~\cite{Lnenicka2007}, let $\Nc = [d]$ and $\Rc(\Nc)$ be the set of all couples $(ij|K)$ such that $i$ and $j$ are distinct singletons of $\Nc$ and $K \subset \Nc\setminus ij$. Subsets of $\Rc(\Nc)$ are called relations. To simplify notation, unless otherwise stated, couples of the form $(ij|K)$ are always assumed to belong to $\Rc(\Nc)$. 

\begin{defn}
	The dual couple of $(ij|K)$ is $(ij|\Nc\setminus ijK)$. 
	For a relation $\Lc \subset \Rc(\Nc)$, the \emph{dual relation} $\Lc^\dual$ is defined as the relation containing all the dual couples of the elements of $\Lc$.
\end{defn}
For any matrix $A \in \reals^{d \times d}$, let \begin{align*}
\ipp{A} := \big\{(ij|K) :\; \; |A_{iK,jK}| = 0\big\}.
\end{align*}
By Lemma~1 in~\cite{Lnenicka2007}, for an invertible matrix $A$, we have $\ipp{A}^\dual = \ipp{A^{-1}}$.  For a simple undirected graph $G$ with vertex set $\Nc$, let
\begin{align*}
\ip{G} := \big\{(ij|K):\; \;\text{$K$ separates $i$ and $j$ in $G$} \big\}.
\end{align*}
Recalling the notation $\Pc_t^{ij}(G)$, let us  write
$\Pc^{ij}_t(G;K) := \{\Pi \in \Pc^{ij}_t(G) :\; \Pi \subset ijK\}$ to denote the set of $ij$-paths in $G$ of length $t+1$ that pass entirely through $K$. Also recall that for $\delta \in \reals^G$ and any path $\Pi \in \Pc_t(G)$ (of length {$t+1$) in $G$, the quantity $\delta_\Pi$ is well-defined using~\eqref{eq:b:Pi}.
We define:
\begin{align}\label{eq:strong:path:condition}
\Dc = \Dc_G:= \Big\{ \delta \in [-1,1]_*^G:\; \sum_{\Pi \, \in\, \Pc^{ij}_t(G)} \delta_\Pi \neq 0,\; \;
\text{for all nonempty\,} \Pc^{ij}_t(G), \, ij \in G,\; 0 \le t < d \Big\}. 
\end{align}
Note that by definition, $\delta_{i,j} = \delta_{j,i}$ for $\delta \in \reals^G$.

\begin{thm}\label{thm:zero:measure:dual}
	Let $G$ be a simple graph with vertex set $\Nc$. Then, for any $\delta \in \Dc$, there are finitely many $\eps \in \complex$ for which $ \ip{G}^\dual = \ipp{A^{G,\delta,\eps}}$ fails, where $A^{G,\delta,\eps}$ is defined in~\eqref{eq:A:G:delta:eps}.
\end{thm}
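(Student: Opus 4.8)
The plan is to analyze, for each couple $(ij|K)\in\Rc(\Nc)$ separately, the determinant $|A^{G,\delta,\eps}_{iK,jK}|$ as a polynomial in $\eps$ and to read off its vanishing from the path structure of $G$ via Lemma~\ref{lem:cycle}. First I would expand this determinant over the bijections matching the rows $iK$ to the columns $jK$. Identifying the row-index $i$ and the column-index $j$ as a single distinguished node ``$1$'', this determinant is exactly $|B(\eps)|$ for the auxiliary directed graph $H=H_{ij,K}$ obtained from the induced subgraph $G[ijK]$ by merging $i$ and $j$ into node $1$: the resulting $B$ carries a self-loop at $1$ precisely when $ij\in G$, has unit diagonal elsewhere, and off-diagonal entries $\delta_{\cdot,\cdot}\,\eps\,1\{\cdot\in H\}$, which is exactly the setup of Lemma~\ref{lem:cycle} (via~\eqref{eq:A:G:delta:eps}). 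Under this identification the $1$-cycles of $H$ of length $t+1$ correspond bijectively to the $ij$-paths of $G$ contained in $ijK$, i.e.\ to $\Pc^{ij}_t(G;K)$, with $\delta_C=\delta_\Pi$ for the matched cycle/path.

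With this dictionary in hand, the two inclusions comprising $\ip{G}^\dual=\ipp{A^{G,\delta,\eps}}$ follow from the two parts of Lemma~\ref{lem:cycle}. For the first, recall that $(ij|K)\in\ip{G}^\dual$ holds iff $\Nc\setminus ijK$ separates $i$ from $j$ in $G$, which happens iff no $ij$-path of $G$ lies inside $ijK$, i.e.\ $\Pc^{ij}_t(G;K)=\emptyset$ for every $t$, equivalently $\Cc^1_t(H)=\emptyset$ for every $t$. In that case Lemma~\ref{lem:cycle}(a) gives $|A_{iK,jK}|=0$ in $\complex[\eps]$, so $(ij|K)\in\ipp{A^{G,\delta,\eps}}$ for \emph{every} $\eps$; hence $\ip{G}^\dual\subseteq\ipp{A^{G,\delta,\eps}}$ for all $\eps$, contributing no exceptional values.

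For the reverse inclusion, suppose $(ij|K)\notin\ip{G}^\dual$, so $\Pc^{ij}_t(G;K)\ne\emptyset$ for some $t$ and $H$ has a $1$-cycle. Here I would invoke $\delta\in\Dc$: the defining inequalities of $\Dc$ are meant to guarantee the non-vanishing hypothesis~\eqref{eq:cycle:conditionexact} for the auxiliary graph $H$, i.e.\ $\sum_{\Pi\in\Pc^{ij}_t(G;K)}\delta_\Pi\ne0$ whenever $\Pc^{ij}_t(G;K)$ is nonempty. Lemma~\ref{lem:cycle}(b) then yields $|A_{iK,jK}|\ne0$ in $\complex[\eps]$, a nonzero polynomial, which therefore has only finitely many complex roots; thus $(ij|K)\in\ipp{A^{G,\delta,\eps}}$ for at most finitely many $\eps$. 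Since $\Rc(\Nc)$ is finite, I would let $Z\subset\complex$ be the union, over the finitely many couples $(ij|K)\notin\ip{G}^\dual$, of the finite root sets of the corresponding determinant polynomials. Then $Z$ is finite, and for every $\eps\in\complex\setminus Z$ both inclusions hold, giving $\ip{G}^\dual=\ipp{A^{G,\delta,\eps}}$; so the identity fails only for $\eps\in Z$.

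The main obstacle is Step~1: making the determinant-to-auxiliary-graph identification precise and, inside Lemma~\ref{lem:cycle}(b), controlling the sign bookkeeping so that the lowest-degree coefficient of $|B(\eps)|$ equals a uniform sign times $\sum_{C\in\Cc^1_{t^*}(H)}\delta_C$, where $t^*$ indexes the shortest $1$-cycles. This is what converts~\eqref{eq:cycle:conditionexact} into non-vanishing of the polynomial: all $1$-cycles of a common length share the permutation sign $(-1)^{t^*}$, while the residual vertices contribute unit-diagonal fixed points, so no internal cancellation occurs at the minimal degree. A secondary point requiring care is to confirm that $\delta\in\Dc$ indeed supplies~\eqref{eq:cycle:conditionexact} for \emph{every} auxiliary graph $H_{ij,K}$ as $(ij|K)$ ranges over $\Rc(\Nc)$ --- that is, that the path-sum conditions defining $\Dc$ control the restricted sums $\sum_{\Pi\in\Pc^{ij}_t(G;K)}\delta_\Pi$ for all conditioning sets $K$, and not merely the full-path sums.
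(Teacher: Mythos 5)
Your proposal follows essentially the same route as the paper: for each couple $(ij|K)$ the determinant $|A^{G,\delta,x}_{iK,jK}|$ is matched to the auxiliary directed graph $H$ obtained by merging $i$ and $j$ into node $1$, Lemma~\ref{lem:cycle}(a) handles the separated couples (zero polynomial, no exceptional $\eps$), Lemma~\ref{lem:cycle}(b) together with $\delta\in\Dc$ makes the remaining determinants nonzero polynomials with finitely many roots, and the exceptional set is the finite union of these root sets over the finitely many couples in $\Rc(\Nc)$. The ``secondary point'' you flag --- that $\delta\in\Dc$ must supply the restricted non-vanishing $\sum_{\Pi\in\Pc^{ij}_t(G;K)}\delta_\Pi\neq 0$ for every conditioning set $K$, not merely the unrestricted path sums appearing in~\eqref{eq:strong:path:condition} --- is precisely the step the paper also asserts in a single sentence without elaboration, so your treatment is no less complete than the paper's own.
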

\begin{proof}
	Let $A^{G,\delta,x}$ be the matrix with elements in $\complex[x]$ obtained by replacing $\eps$ in $A^{G,\delta,\eps}$ by indeterminate $x$. Consider 
	\begin{align*}
	\ipp{A^{G,\delta,x}}_{\complex[x]} := \{(ij|K):\; |A_{iK,jK}^{G,\delta,x}| = 0\;\,\text{in}\;\,\complex[x]\}. 
	\end{align*}

	\underline{Step 1.} Fix $\delta \in \Dc$. We show that $\ip{G}^\dual = \ipp{A^{G,\delta,x}}_{\complex[x]} $. Consider the matrix $A_{iK,jK}^{G,\delta,x}$, and 
	assume that its first row and column correspond to the $i$th row and $j$th column of $A^{G,\delta,x}$, by swapping rows and columns if necessary, noting that such operations do not change the determinant $|A_{iK,jK}^{G,\delta,x}|$.
We identify $(i,j)$ with element $(1,1)$ in Lemma~\ref{lem:cycle}. Let $H$ be the subgraph of $G$ induced on nodes $ijK$, with nodes $i$ and $j$ identified together and renamed node $1$.
	The only edges in $H$ that can be directed are those incident on node 1: $\{1,k\}\in H$ iff $\{i,k\}\in G$, and $\{k,1\}\in H$ iff $\{k,j\}\in G$. All the undirected edges in $H_{KK}$ are considered bidirected. In other words, the support of $A_{iK,jK}$ is the adjacency matrix of $H$, which can be asymmetric and thus correspond to a directed graph.
A path from $i$ to $j$ in $G$ that lies entirely in $ijK$ corresponds to a cycle in $H$ starting at node $1$, that is, we can identify $\Pc_t^{ij}(G;K)$ with $\Cc_t^1(H)$. A possible edge between $i$ and $j$ in $G$ will be a self-loop on node $1$ in $H$, i.e., $\delta_{ij} x$ plays the role of $\delta_{11}x$ in Lemma~\ref{lem:cycle}. 
Since $\delta\in\Dc$, it follows from \eqref{eq:strong:path:condition}
   	that assumption~\eqref{eq:cycle:conditionexact} holds for $H$ whenever
$(ij|K) \in \Rc(\Nc)$. It follows from Lemma~\ref{lem:cycle} that $|A_{iK,jK}^{G,\delta,x}| = 0$ if and only if $\Pc_t^{ij}(G;K)$ is empty for all $t \ge 0$. Hence, $i$ and $j$ are separated in $G$ by $\Nc\setminus ijK$, or in symbols $(ij|K) \in \ip{G}^\dual$, if and only if $|A_{iK,jK}^{G,\delta,x}| = 0$.
	
	\smallskip
	\underline{Step 2.} Fix $\delta \in \reals^G$. Then $|A_{iK,jK}^{G,\delta,x}| = 0\;\text{in}\;\complex[x]$ implies $|A_{iK,jK}^{G,\delta,\eps}| = 0$ for all $\eps \in \complex$.
That is, 
	\begin{align}\label{eq:strict:inclusion}
	\ipp{A^{G,\delta,x}}_{\complex[x]} \subset \ipp{A^{G,\delta,\eps}}, \quad \forall \eps \in \complex.
	\end{align}
The inclusion is strict if and only if there is $(ij|K)$ such that $p_{ijK}(x,\delta):= |A_{iK,jK}^{G,\delta,x}|$ is a nonzero polynomial  (in $\complex[x]$) with root $\eps$. Since any such polynomial has a finite number of roots, we have $  \ipp{A^{G,\delta,x}}_{\complex[x]} = \ipp{A^{G,\delta,\eps}} $, for all but finitely many $\eps \in \complex$. Combined with Step~1, the  assertion follows.
\end{proof}

\begin{rem}
The above proof contains the key intuition for defining~$\Dc$ as in~\eqref{eq:strong:path:condition}: For any $\delta\in \Dc_G$ and all but a finite number of $\eps$, $ \ip{G}^\dual = \ipp{A^{G,\delta,\eps}}$ and thus, $ \ip{G} = \ipp{A^{G,\delta,\eps}}^\dual=\ipp{\Sigma}$, where $\Sigma=(A^{G,\delta,\eps})^{-1}$ is the covariance matrix of $X$. This implies that
	$i-K-j$ in $G$ if and only if $|\Sigma_{iK,jK}|=0$, which is equivalent to $\rv_{i}\independent \rv_{j}\given \rv_K$ by Lemma~\ref{lem:covariance:char:pair} below. See the proof of Lemma~\ref{lem:Markov:Siginv:dual} for a rigorous argument.
\end{rem}

The following lemma is straightforward (see for example \cite{lattice-paper}):
\begin{lem}\label{lem:covariance:char:pair}
	Suppose $X \sim N(0,\Sigma)$ and $\Sigma \succ 0$. Then,
	$|\Sigma_{Si,Sj}| = 0$ is equivalent to $\rv_{i}\independent \rv_{j}\given \rv_S$
for all $i,j$ and $S \subset [d]_{ij}$.
\end{lem}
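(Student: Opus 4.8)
The plan is to reduce the determinantal condition $|\Sigma_{Si,Sj}|=0$ to the vanishing of a single entry of the conditional covariance matrix of $(X_i,X_j)$ given $X_S$, and then invoke the standard fact that, for jointly Gaussian variables, conditional independence is equivalent to vanishing conditional covariance. Throughout, $i\neq j$ and $S\subset[d]_{ij}$, so $S$, $\{i\}$, $\{j\}$ are disjoint.

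First I would recall the conditional law. Since $X\sim N(0,\Sigma)$ with $\Sigma\succ 0$, the conditional distribution of $(X_i,X_j)$ given $X_S$ is Gaussian with covariance matrix equal to the Schur complement
\[
M \;=\; \Sigma_{ij} - \Sigma_{ij,S}\,\Sigma_S^{-1}\,\Sigma_{S,ij},
\]
a $2\times 2$ matrix (here $\Sigma_S$ is invertible because every principal submatrix of a positive definite matrix is positive definite). For a bivariate Gaussian, the two coordinates are independent if and only if their covariance vanishes; since the Gaussian conditional covariance $M$ does not depend on the value of $X_S$, this yields $\rv_i\independent \rv_j\given \rv_S$ if and only if $M_{ij}=0$, where $M_{ij}=\Sigma_{i,j}-\Sigma_{i,S}\Sigma_S^{-1}\Sigma_{S,j}$.

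Next I would identify $M_{ij}$ with the determinant in the statement. Writing $\Sigma_{Si,Sj}$ in block form with rows ordered as $(S,i)$ and columns as $(S,j)$ gives
\[
\Sigma_{Si,Sj} = \begin{pmatrix} \Sigma_{S} & \Sigma_{S,j} \\ \Sigma_{i,S} & \Sigma_{i,j}\end{pmatrix}.
\]
The top-left block $\Sigma_S$ is square and invertible and the bottom-right block $\Sigma_{i,j}$ is a scalar, so the Schur-complement determinant formula applies even though this matrix is not symmetric:
\[
|\Sigma_{Si,Sj}| = |\Sigma_S|\,\bigl(\Sigma_{i,j}-\Sigma_{i,S}\Sigma_S^{-1}\Sigma_{S,j}\bigr) = |\Sigma_S|\;M_{ij}.
\]
Because $\Sigma_S\succ 0$ we have $|\Sigma_S|>0$, and therefore $|\Sigma_{Si,Sj}|=0$ if and only if $M_{ij}=0$, which by the previous step is equivalent to $\rv_i\independent \rv_j\given \rv_S$. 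This closes the chain of equivalences.

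There is no serious obstacle here; the only point requiring care is the bookkeeping of the asymmetric block decomposition, namely arranging the row index set $Si$ and column index set $Sj$ so that the shared block is exactly $\Sigma_S$ and the resulting Schur complement collapses to the single entry $M_{ij}$ rather than a full $2\times 2$ block. Once the indices are lined up, both the determinant identity and the conditional-covariance characterization are entirely standard, which is why the lemma is stated as straightforward.
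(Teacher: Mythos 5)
Your proof is correct, and it is the standard argument one would expect here: the Schur-complement determinant identity $|\Sigma_{Si,Sj}| = |\Sigma_S|\,\bigl(\Sigma_{i,j}-\Sigma_{i,S}\Sigma_S^{-1}\Sigma_{S,j}\bigr)$ (valid for any square block matrix with invertible leading block, symmetry not required), combined with the fact that the scalar factor is exactly the conditional covariance of $X_i$ and $X_j$ given $X_S$, which vanishes iff $X_i \independent X_j \given X_S$ in the Gaussian case. The paper does not actually prove this lemma --- it declares it straightforward and cites an external reference --- so there is nothing to compare against; your write-up fills that gap correctly, including the one point that genuinely needs care (the asymmetric block bookkeeping so that the Schur complement collapses to the single entry rather than a $2\times 2$ block, and the observation that the sign-ambiguity from ordering the index sets is irrelevant to whether the determinant vanishes).
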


\begin{lem}\label{lem:Markov:Siginv:dual}
If $\ip{G}^\dual = \ipp{\Sigma^{-1}}$, then $\Sigma$ is Markov perfect w.r.t. $G$.
\end{lem}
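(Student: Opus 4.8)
The plan is to convert the dual statement in the hypothesis into a direct (``primal'') identity about $\Sigma$ itself, and then to lift a pairwise characterization of conditional independence and of separation to arbitrary sets. First I would dualize. The couple map $(ij|K) \mapsto (ij|\Nc\setminus ijK)$ is self-inverse, so dualization is an involution—hence injective—on relations in $\Rc(\Nc)$. Since $\Sigma \succ 0$ is invertible, Lemma~1 of~\cite{Lnenicka2007} gives $\ipp{\Sigma^{-1}} = \ipp{\Sigma}^\dual$. Combining with the hypothesis $\ip{G}^\dual = \ipp{\Sigma^{-1}}$ yields $\ip{G}^\dual = \ipp{\Sigma}^\dual$, and injectivity of dualization gives the equivalent identity
\[
\ip{G} = \ipp{\Sigma}.
\]
Unwinding the definitions, this says that for every couple $(ij|K)$, the set $K$ separates $i$ and $j$ in $G$ if and only if $|\Sigma_{iK,jK}| = 0$. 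By Lemma~\ref{lem:covariance:char:pair}, the latter is equivalent to $\rv_i \independent \rv_j \given \rv_K$, so I obtain the pairwise faithfulness statement
\[
\sep(i,K,j) \ \text{in}\ G \iff \rv_i \independent \rv_j \given \rv_K, \qquad i \neq j,\ \ K \subset \Nc\setminus ij.
\]

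Next I would lift this pairwise equivalence to arbitrary disjoint sets $A,B,C$. On the graph side the lifting is elementary: $C$ separates $A$ and $B$ in $G$ if and only if $C$ separates $a$ and $b$ for every $a\in A$ and $b\in B$, since every path connecting $A$ to $B$ is a path connecting some $a\in A$ to some $b\in B$, and disjointness of $A,B,C$ guarantees $a,b\notin C$. On the probabilistic side I would use the standard Gaussian characterization of conditional independence via the conditional cross-covariance (which generalizes Lemma~\ref{lem:covariance:char:pair}): writing $\Sigma_{A,B|C} := \Sigma_{A,B} - \Sigma_{A,C}\Sigma_{C,C}^{-1}\Sigma_{C,B}$, well defined as $\Sigma_{C,C}\succ 0$, one has $\rv_A \independent \rv_B \given \rv_C$ iff $\Sigma_{A,B|C}=0$, and this block vanishes iff each scalar entry $\Sigma_{a,b|C}$ vanishes, i.e. iff $\rv_a \independent \rv_b \given \rv_C$ for all $a\in A,\ b\in B$. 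Chaining the graph reduction, the pairwise equivalence above, and this block reduction gives
\[
\sep(A,C,B) \ \text{in}\ G \iff \rv_A \independent \rv_B \given \rv_C
\]
for all disjoint $A,B,C$, which is exactly perfectness of $\Sigma$ w.r.t. $G$. Markovianity is the special case $K=\Nc\setminus ij$: then $\sep(i,\Nc\setminus ij,j)$ holds iff $i\nsim j$, while $\rv_i \independent \rv_j \given \rv_{[d]_{ij}}$ holds iff $(\Sigma^{-1})_{i,j}=0$, so $\supp(\Sigma^{-1})=G^\circ$; thus $\Sigma$ is Markov perfect.

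The only genuinely Gaussian-specific ingredient—and the step I expect to require the most care—is the pairs-to-set direction of the conditional-independence lifting, namely that $\rv_a\independent\rv_b\given\rv_C$ for all $a,b$ forces $\rv_A\independent\rv_B\given\rv_C$. This is the composition property, which fails for general distributions but holds here precisely because vanishing of every entry of the conditional cross-covariance block forces the entire block to vanish. The remaining points are bookkeeping: confirming that dualization is an involution so that dualizing the hypothesis recovers $\ip{G}$ exactly, and tracking the disjointness constraints on $A,B,C$ so that each pairwise couple $(ab|C)$ genuinely lies in $\Rc(\Nc)$ when passing between the set-level and pairwise statements.
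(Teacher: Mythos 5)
Your proposal is correct, and its core coincides with the paper's: both proofs begin by dualizing the hypothesis via Lemma~1 of~\cite{Lnenicka2007} and the involutivity of $\dual$ to obtain $\ip{G} = \ipp{\Sigma}$, and both then invoke Lemma~\ref{lem:covariance:char:pair} to translate vanishing minors $|\Sigma_{iK,jK}|=0$ into pairwise conditional independences. Where you diverge is in lifting the pairwise statement to arbitrary disjoint sets $A,B,C$. The paper argues by contradiction and only in one direction: it assumes some $\rv_A \independent \rv_B \given \rv_K$ holds without $\sep(A,K,B)$, picks a non-separated pair $i\in A$, $j\in B$, and uses only the (distribution-free) decomposition property to get $\rv_i\independent\rv_j\given\rv_K$ and hence a contradiction; the converse direction (separation implies conditional independence) is left to the global Markov property already established in Section~\ref{sec:Gauss:graph:mod} from $\supp(\Sigma^{-1})=G^\circ$. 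You instead prove the biconditional directly, reducing $\rv_A\independent\rv_B\given\rv_C$ to the entrywise vanishing of the conditional cross-covariance block $\Sigma_{A,B|C}$ --- i.e., the Gaussian composition property --- and matching it against the elementary pairwise reduction of graph separation. Your route is more self-contained (it does not lean on the pairwise-to-global Markov implication) at the cost of invoking one additional Gaussian-specific fact, which you correctly identify as the crux; the paper's route is slightly leaner because decomposition holds for arbitrary distributions. Both arguments are sound, and your bookkeeping on disjointness (so that each $(ab|C)$ lies in $\Rc(\Nc)$) and on Markovianity as the special case $K=\Nc\setminus ij$ is accurate.
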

\begin{proof}First we note that by Lemma~1 in~\cite{Lnenicka2007}, $\ip{G} = \ip{G}^{\dual \dual} = \ipp{\Sigma^{-1}}^\dual = \ipp{\Sigma}$. Recall $\Nc := [d]$. Assume that $\Sigma$ is not perfect. Then, there exist nonempty disjoint sets $A,B \subset \Nc$ and $K \subset \Nc \setminus AB$ such that $\rv_A \independent \rv_B \given \rv_K$,
	and $K$  does not separate $A$ and $B$. Then, $\exists i \in A, j \in B$ such that $\neg(\sep(i,K,j))$ and clearly $K \subset \Nc \setminus ij$ (i.e., $(ij|K) \in \Rc(\Nc)$). We also have 
	$\rv_{i}\independent \rv_{j}\given \rv_K$,
hence $|\Sigma_{Ki,Kj}| = 0$ by Lemma~\ref{lem:covariance:char:pair}. That is, $(ij|K) \in \ipp{\Sigma}$, hence we should have $(ij|K) \in \ip{G}$, contradicting $\neg(\sep(i,K,j))$. The proof is complete.
\end{proof}

\begin{proof}[Proof of Theorem~\ref{thm:zero:measure:main}]
	Part~(c) of Theorem~\ref{thm:zero:measure:main}, with $\Dc$ given by~\eqref{eq:strong:path:condition}, follows from Theorem~\ref{thm:zero:measure:dual} and Lemma~\ref{lem:Markov:Siginv:dual} and the relation $\Sigma^{-1} = A^{G,\delta,\eps}$. For part~(a), we note that $\Dc^c := \{\delta \in [-1,1]_*^G: \delta \notin\Dc\}$ is the finite union of the zero sets of nontrivial polynomials, hence of $\Lc^g$-measure zero in $[-1,1]_*^G$ (as a subset of $\reals^G \simeq \reals^g$). For part~(b), let $\sphere^G_\infty = \bigcup_{ij} (F_{ij}^+ \cup F_{ij}^-)$ be the decomposition of $\sphere^G_\infty$ into its $(g-1)$-dimensional faces: $F_{ij}^{\pm} = \{\delta:\; \delta_{ij} = \pm 1\}$. It is enough to show, for example, that $F_{ij}^{+} \cap \Dc^c$ has $\Hc^{g-1}$-measure zero. Let $G'$ be $G$ with edge $ij$ removed. By fixing $\delta_{ij} = 1$, we can view $F_{ij}^+ \cap \Dc^c$ as a subset of $F_{ij}^+ \subset \reals^{G'} \simeq \reals^{g-1}$.  Recalling the definition of $\Dc$,~\eqref{eq:strong:path:condition}, we observe, as before, that $F_{ij}^+ \cap \Dc^c$ as a subset of $\reals^{g-1}$ has $\Lc^{g-1}$-measure zero as a finite union of the zero sets of nontrivial polynomials in $g-1$ variables $\delta_{G'} = (\delta_{rs},rs \in G')$. Since $\Lc^{g-1} = \Hc^{g-1}$ on $\reals^{g-1}$, the assertion follows.
	
	For part~(d), both $\Lc^{g+1}(\Nc^G) = 0$ and $\Hc^{g}(\Nc_\infty^G) = 0$ follow from the Fubini theorem for the Lebesgue measure. For example, consider the latter assertion. It is enough to show $\Hc^{g}(\Nc^G \cap (F_{ij}^+ \times \reals)) = 0$. Viewing $\Nc^G \cap (F_{ij}^+ \times \reals)$ as a subset of $\reals^{g-1} \times \reals$, as above, and using the decomposition of the Lebesgue measure $\Lc^g = \Lc^{g-1} \times \Lc^1$, Fubini theorem gives
	\begin{align*}
	\Hc^{g}\big(\Nc^G \cap (F_{ij}^+ \times \reals)\big)
	&= \int_{F_{ij}^+} \Lc^1(B_\delta)  \, d\Hc^{g-1} (\delta)\\
	&= \int_{F_{ij}^+ \,\cap\, \Dc^c} \Lc^1(B_\delta)  \, d\Hc^{g-1} (\delta) + 
	\int_{F_{ij}^+ \,\cap\, \Dc} \Lc^1(B_\delta)  \, d\Hc^{g-1} (\delta).
	\end{align*}        
Both integrals are zero, the first since $\Hc^{g-1}(F_{ij}^+\cap \Dc^c) = 0$ by part~(b), and the second since $B_\delta$ has finitely many elements hence $\Lc^{1}(B_\delta) = 0$, by part~(c). The proof is complete. \end{proof}

\section{Proofs of auxiliary results}\label{sec:proof:aux}

We recall the following notational conventions: For a matrix $\Sigma \in \reals^{d \times d}$, and subsets $A,B \subset [d]$, we use $\Sigma_{A,B}$ for the submatrix on rows and columns indexed by $A$ and $B$, respectively. Single index notation is used for principal submatrices, so that $\Sigma_{A} = \Sigma_{A,\,A}$. For example, $\Sigma_{i,j}$ is the $(i,j)$th element of $\Sigma$ (using the singleton notation), whereas $\Sigma_{ij} = \Sigma_{ij,\,ij}$ is the $2\times 2$ submatrix on $\{i,j\}$ and $\{i,j\}$.

\subsection{Proof of Lemma~\ref{lem:cycle}}\label{sec:proof:cycle}
Recall the definition of the $i_0$-cycle (of $[d]$ or a graph $G$) from Section~\ref{sec:proof:zero:measure}. In proving  Lemma~\ref{lem:cycle}, we will  use the term \emph{cycle} to also refer to cycles of a permutation. 
The necessary background on cycle decomposition is briefly reviewed below. 
The two notions of cycle (graph versus permutation) are related in our arguments, and the distinction in each occurrence should be clear from the context.

Recall that every permutation $\pi$ on $[d]$, that is, a bijective map $\pi :[d] \to [d]$,  has a unique \emph{cycle decomposition}, once we agree on a particular order within cycles and among them~\cite[Section~1.3]{stanley1997enumerative}. For example, representing $\pi = (142)(35)$ means that $\pi$ has two cycles $C_1= \{1,4,2\}$ and $C_2=\{3,5\}$. $C_1$ being a cycle means that $\pi$ maps 1 to 4, 4 to 2, and 2 back to 1, and similarly for $C_2$. We treat the cycles of $\pi$ as ordered sets
with the smallest element written first, and the rest of the order determined by the action of $\pi$. (That is, if $C = \{i_0,i_1,\dots,i_t\}$ is a cycle of $\pi$, we have $i_0 < i_j$ and $\pi(i_{j-1}) = i_j$ for $j=1,\dots,t$.) Thus, permutation cycles are also graph cycles in the sense of Section~\ref{sec:proof:zero:measure}.  The (unordered) collection of cycles of $\pi$ will be denoted as $\Cyc_\pi$. In the example, $\Cyc_\pi =\{C_1,C_2\}$. The ordering among the cycles is unimportant.
In forming $\Cyc_\pi$, we disregard trivial cycles, that is, those containing a single element, except for the cycle containing $1$. We often talk about ``single cycle'' permutations: for example, $\pi' = (142)(3)(5)$ has a single cycle $C_1 = \{1,4,2\}$ in our convention, while $\pi'' = (1) (42)(3)(5)$ has two cycles $C_1 = \{1\}$ and $C_2 = \{42\}$. Similarly, the identity permutation has a single cycle in our convention. 

For matrix $B = (b_{i,j}) \in \reals^{d \times d}$ and permutation $\pi$ on $[d]$, we write
\begin{align}\label{eq:bpi:expan}
b_{\pi} := \prod_{i \in [d]} b_{i,\pi(i)} = \prod_{C \,\in\, \Cyc_\pi} b_{C},
\end{align}
where $b_C$ is as defined\footnote{The notation $b_\pi$ is also consistent with the definition of $b_C$ in  Section~\ref{sec:proof:zero:measure} due to the following connection: Every (graph) cycle $C$ can be viewed as a permutation that leaves elements outside $C$ intact.} in~\eqref{eq:b:Pi}. Since $b_{\{i\}} = b_{ii} = 1$ for $i\neq 1$, dropping single cycles $\{i\}$, for $i\neq 1$, from $\Cyc_\pi$ does not affect~\eqref{eq:bpi:expan}.  For the example above, the two expressions are 
\begin{align*}
b_\pi = b_{1,4} b_{2,1} b_{3,5} b_{4,2} b_{5,3} = (b_{1,4} b_{4,2} b_{2,1}) (b_{3,5} b_{5,3}).
\end{align*}
For any permutation $\pi$, let $C_\pi$ be its $1$-cycle, i.e., its cycle that contains $1$ and let $t_\pi = |C_\pi \setminus\{1\}| = |C_\pi| - 1$.  Note that $b_{C_\pi} = \prod_{i \in C_\pi} b_{i,\pi(i)}$ is a factor of $b_\pi$.

\begin{proof}[Proof of Lemma~\ref{lem:cycle}]
For simplicity, we will drop the explicit dependence on $x$ and write $B = (b_{i,j})$. It is well-known that
	\begin{align*}
	|B| = \sum_\pi \sign(\pi) b_\pi.
	\end{align*}
	First, consider the part~(a). Assume $\Cc_t^1(H) = \emptyset$ for all $t \ge 0$. The case $t = 0$ gives $\{1,1\}\notin H$, hence $b_{i,\pi(i)} = b_{1,1} = 0$ whenever $C_\pi = \{1\}$. Similarly, for any $C_\pi$ with $|C_\pi| > 1$, there are $i,j \in C_\pi$ with $i \neq j = \pi(i)$, such that $\{i,j\} \notin H$, hence $b_{i,\pi(i)} = 0$. Thus, $b_\pi = 0 $ for all $\pi$, giving $|B| = 0$ and proving part~(a).

	Now assume $|B| = 0$. 
We start by showing that $b_{C_\pi} = 0$ for all $\pi$.
We proceed by induction on $t_\pi = |C_\pi|-1$. Fix $0 \le t < r$. It suffices to show that if $b_{C_\pi} = 0$ for all $\pi$ with $t_\pi < t$, then $b_{C_\pi} = 0$ for all $\pi$ with $t_\pi = t$. The same argument below, with $t=0$, establishes the initial step of the induction. For any cycle $C$, \begin{align}\label{eq:bC:expr}
	b_C = \delta_C x^{|C|} 1\{C \in H\},
	\end{align}
	that is, $b_{C}$ is equal to 0 or $\delta_{C} x^{|C|}$, the latter if and only if $C \in H$. Here, $\delta_C$ is defined similar to $b_C$.  

	 By the induction assumption, it follows that $b_\pi = 0 $ for all $\pi$ for which $t_\pi < t$ since $b_{C_\pi}$ is a factor of $b_\pi$.
		 It follows that  $0 = |B| = \sum_{\pi :\; t_\pi \,\ge\, t} \sign(\pi) b_\pi$. There are three types of terms in this expansion: (Below, $\Cyc_\pi$ is the cycle decomposition of $\pi$, using the convention discussed earlier.)
	\begin{enumerate}[(a)] 
		\item $|\Cyc_\pi| = 1, t_\pi = t$: 
These have a cycle $C_\pi$ of length $t+1$ containing 1, and every other cycle is trivial.
		All of these permutations have the same sign, and we have
		\begin{align}\label{eq:temp:4875}
		b_\pi  = b_{C_\pi} = \delta_{C_\pi} x^{t+1} 1\{C_\pi \in H\}.
		\end{align}
		The first equality is since $b_{i,i} = 1$ for all $i \neq 1$.  
As $\pi$ varies over the permutations in this category, $C_\pi$ runs over all $\Cc_t^{1}$, i.e., cycles of length $t+1$ over $[r]$ containing~$1$. That is,
		\begin{align*}
		 \{C_\pi : t_\pi = t \} = \Cc_t^{1}. 
		\end{align*}
		(Note that the correspondence also holds for $t = 1$ since the edges as considered directed. E.g., the permutation cycle $C_\pi = (12)$ corresponds to the graph cycle $1 \to 2$ and $2 \to 1$ in $\Cc_1^1$. In this case, we have $b_\pi = \delta_{12}\delta_{21} x^2 \big\{\{1,2\} \in H, \{2,1\} \in H \big\}$.)

		However, only the subset $\Cc_t^{1}(H)$ of $\Cc_t^{1}$ contributes to $|B|$ due to the indicator $1\{C_\pi \in H\}$ in~\eqref{eq:temp:4875}.
		There are two possible cases:
		\begin{enumerate}[(i)]
			\item $\Cc_t^{1}(H) = \emptyset$; then $b_{C_\pi} = 0$ for all $\pi$ such that $|\Cyc_\pi| = 1$ and $t_\pi = t$.
			
			\item $\Cc_t^{1}(H) \neq \emptyset$; then,   these permutations contribute to $|B|$, a term $\pm \big(\sum_{C\,\in\, \Cc_t^{1}(H)} \delta_C \big) x^{t+1}$.
		\end{enumerate}

\item $|\Cyc_\pi| \ge 2, t_\pi = t$: Any such permutation has at least a cycle $C$ of size $\nu \ge 2$ in $[r] \setminus C_\pi$. Hence, $b_\pi$ has a factor of the form
		\begin{align*}
		b_{C_\pi} b_C = \delta_{C_\pi} \delta_{C}\, x^{t+\nu+1} 1\{C_\pi,C \in H\}
		\end{align*}
		Thus, any such $b_\pi$, if nonzero, contributes a polynomial of degree at least $t+3$.
		
		\item $t_\pi \ge t+1$: In this case, $b_\pi$ has a factor of $b_{C_\pi}  = \delta_{C_\pi} x^{t_\pi+1} 1\{C_\pi \in H\}$
		and as the previous case contributes a polynomial of degree at least $t+2$, if nonzero. \end{enumerate}
	
	Thus, the coefficient of $x^{t+1}$ in $|B|$ is determined only by permutations of type~(a). But, since this coefficient is zero by the assumption that $|B|=0$, we conclude that case (ii)  above cannot occur, since then $\sum_{C \in \Cc_t^{1}(H)} \delta_C = 0$ for some nonempty $\Cc_t^{1}(H)$ with  $t \in \{0,\dots,r-1\}$, contradicting assumption~\eqref{eq:cycle:conditionexact}.

	This in turn implies that
	for any permutation $\pi$ of type~(a), we have $b_{C_\pi} = 0$, by~\eqref{eq:temp:4875} and that $H$ cannot contain any cycle of size $t+1$. But this proves the induction claim: For any permutation $\pi'$ with $t_{\pi'} = t$, there is permutation $\pi$ of type~(a) such that $C_\pi = C_{\pi'}$ (i.e. break all the cycles of $\pi'$, other than $C_{\pi'}$, into trivial ones).
	
	As a byproduct of establishing the induction claim, we also obtain $\Cc_t^1(H) = \emptyset$ for all $t \ge 0$ which is the desired result. (In particular, with $t=0$, it means that $H$ cannot have a self-loop on node $1$ if $|B| = 0$.)  The proof is complete.
\end{proof}

\subsection{Auxiliary lemmas}\label{sec:aux:lemmas}

The following lemma is used in the proof of Theorem~\ref{thm:perfect:perval}. The notation $\xi_* \mu$ denotes the push-forward of measure $\mu$ by map $\xi$.

\begin{lem}\label{lem:rescaling:map:null:sets}
	With $\xi: \reals_{++}^d \times \reals^g \to \reals^g$ defined as in~\eqref{eq:rescaling:map:defn}, we have $\xi_* \Lc^{d+g} \ll \Lc^g$, that is, $\Lc^{g}(A) = 0$ implies $\Lc^{g+d}(\xi^{-1}(A)) = 0$. 
\end{lem}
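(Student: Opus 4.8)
The plan is to exploit the product structure of the domain together with the fact that, for each fixed $x$, the map $\xi$ acts as an invertible linear map in the $y$-variables. First I would fix $x = (x_k)_{k \in [d]} \in \reals_{++}^d$ and observe that the partial map $T_x : \reals^g \to \reals^g$ given by $(T_x y)_{ij} = y_{ij}/\sqrt{x_i x_j}$ is a diagonal linear isomorphism, with Jacobian determinant $\prod_{ij \in G}(x_i x_j)^{-1/2} > 0$; here the positivity of each $x_k$ is exactly what guarantees invertibility. Since an invertible linear map and its inverse both send $\Lc^g$-null sets to $\Lc^g$-null sets (recall $\Lc^g(T E) = |\det T|\,\Lc^g(E)$), for any $A$ with $\Lc^g(A) = 0$ the $x$-slice $\{y \in \reals^g :\; \xi(x,y) \in A\} = T_x^{-1}(A)$ is $\Lc^g$-null, and this holds for every fixed $x$.

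The second step is to assemble these slices via the Fubini--Tonelli theorem. Writing $\xi^{-1}(A) = \{(x,y) :\; T_x y \in A\}$ and using the decomposition $\Lc^{g+d} = \Lc^d \times \Lc^g$ on $\reals_{++}^d \times \reals^g$, I would compute
\[
\Lc^{g+d}\big(\xi^{-1}(A)\big) = \int_{\reals_{++}^d} \Lc^g\big(\{y :\; T_x y \in A\}\big)\, d\Lc^d(x) = \int_{\reals_{++}^d} 0 \, d\Lc^d(x) = 0,
\]
the inner measure vanishing for each $x$ by Step~1.

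The only technical point to address is measurability, so that Fubini applies. A general Lebesgue-null set $A$ need not be Borel, but it is contained in a Borel (indeed $G_\delta$) null set $A'$. Since $\xi$ is continuous, $\xi^{-1}(A')$ is Borel, so the slicing computation above applies verbatim to $A'$ and yields $\Lc^{g+d}(\xi^{-1}(A')) = 0$; the desired conclusion then follows from $\xi^{-1}(A) \subseteq \xi^{-1}(A')$ and completeness of the Lebesgue measure. I do not expect any serious obstacle here: the entire substance lies in the observation that $\xi$ is a fiberwise linear isomorphism, which reduces the statement to the elementary facts that linear isomorphisms preserve null sets and that Fubini integrates null slices to a null product set.
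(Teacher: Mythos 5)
Your proof is correct, but it organizes the argument differently from the paper. The paper decomposes the \emph{map}: it writes $\xi = \pi \circ F_1$, where $F_1(x,y) = \bigl(x,\; y_{ij}/\sqrt{x_i x_j}\bigr)$ is a $C^\infty$ diffeomorphism of $\reals_{++}^d \times \reals^g$ onto itself and $\pi$ is the coordinate projection; Fubini is applied only to $\pi$ (since $\pi^{-1}(A) = \reals^d \times A$), and the transfer of null sets through $F_1^{-1}$ is handled by the fact that locally Lipschitz maps preserve Lebesgue-null sets. You instead decompose the \emph{domain}: for each fixed $x$ the fiber map $T_x$ is a diagonal linear isomorphism, so each $x$-slice $T_x^{-1}(A)$ of $\xi^{-1}(A)$ is $\Lc^g$-null, and Fubini--Tonelli integrates these null slices to a null set. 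Your route is somewhat more elementary --- it needs only the scaling identity $\Lc^g(TE) = |\det T|\,\Lc^g(E)$ for linear $T$ and Tonelli, rather than the (standard but heavier) null-set-preservation property of locally Lipschitz maps --- and you are more careful than the paper about measurability, passing to a Borel $G_\delta$ hull of $A$ before slicing and invoking completeness at the end, a point the paper's proof leaves implicit. What the paper's factorization buys in exchange is reusability: the same ``$C^1$ maps are locally Lipschitz, hence preserve $\Hc^s$-null sets'' principle is already invoked in Step~1 of the proof of Theorem~\ref{thm:perfect:perval} for the map $F$, so the two steps of that proof run on a single mechanism. Either argument is acceptable here.
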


\begin{proof}
	Let $\Omega := \reals_{++}^d \times \reals^g$ be a subset of $\reals^{d+g}$. Let $x = (x_k, \;k \in [d])$ and $y = (y_{ij},\; ij \in G)$. Consider the function $F_1: \Omega \to \Omega$ defined by 
	\begin{align*}
	F_1(x,y) = \Big(x, \frac{y_{ij}}{\sqrt{x_i x_j}},\, ij \in G\Big).
	\end{align*}
	$F_1$ is a $C^\infty$ diffiomorphism of $\Omega$ onto itself, that is, $F_1: \Omega \to \Omega$ is a bijection and both $F_1$ and its inverse $F_2:= F_1^{-1}$ belong to class $C^\infty$. This implies that $F_1$ and $F_2$ are locally Lipschitz (i.e., Lipschitz when restricted to any compact subset of $\Omega$), hence they both preserve $\Lc^{g+d}$-null sets (i.e., map null sets to null sets). 
	
	Let $\pi : \reals^{d+g} \to \reals^g$ be the projection $\pi(x,y) = y$. We can write $\xi = \pi \circ F_1$. 
We  first show that $\pi_* \Lc^{d+g} \ll \Lc^g$. This follows from Fubini theorem: Let $A \subset \reals^g$ be such that $\Lc^g(A) = 0$. We have $\pi^{-1}(A) = \reals^d \times A$. Hence, $\Lc^{d+g}(\pi^{-1}(A)) = \Lc^d(\reals^d) \cdot \Lc^g(A) = 0$ since the Lebesgue measure is $\sigma$-finite.
	
	Now assuming that $\Lc^g(A) = 0$, we thus have $\Lc^{g+d}(\pi^{-1}(A)) = 0$. But then $\Lc^{g+d}(F_2 \circ \pi^{-1} (A)) = 0 $, due to the diffiomorphic nature of $F_2$. Noting that $\xi^{-1} = (\pi \circ F_1)^{-1} = F_1^{-1} \circ \pi^{-1} = F_2 \circ \pi^{-1}$, we have the desired result. The proof is complete.
\end{proof}

\section*{Acknowledgement}

This work was supported in part by NSF grant IIS-1546098.

\printbibliography

\end{document}